\newtheorem{theorem}{Theorem}[section]
\newtheorem{lemma}[theorem]{Lemma}
\newtheorem{corollary}[theorem]{Corollary}
\numberwithin{equation}{section}
\newcommand{\e}{\varepsilon}
\newcommand{\C}{{\mathbb C}}
\newcommand{\R}{{\mathbb R}}
\newcommand{\Real}{\operatorname{Re}}
\newcommand{\tr}{\operatorname{tr}}
\begin{document}

\title[Sums of magnetic eigenvalues]{Sums of magnetic eigenvalues are maximal on rotationally symmetric domains}

\author[Laugesen, Liang and Roy]{Richard S. Laugesen, Jian Liang and Arindam Roy}

\address{Department of Mathematics, University of Illinois, Urbana,
IL 61801, U.S.A.} \email{Laugesen\@@illinois.edu,Liang41\@@illinois.edu,Roy22\@@illinois.edu}
\date{\today}

\keywords{Isoperimetric, Schr\"{o}dinger, tight frame, coherent states}
\subjclass[2000]{\text{Primary 35P15. Secondary 35J20, 35Q40}}

\begin{abstract}
The sum of the first $n \geq 1$ energy levels of the planar Laplacian with constant magnetic field of given total flux is shown to be maximal among triangles for the equilateral triangle, under normalization of the ratio $(\text{moment of inertia})/(\text{area})^3$ on the domain. The result holds for both Dirichlet and Neumann boundary conditions, with an analogue for Robin (or de Gennes) boundary conditions too.

The square similarly maximizes the eigenvalue sum among parallelograms, and the disk maximizes among ellipses. More generally, a domain with rotational symmetry will maximize the magnetic eigenvalue sum among all linear images of that domain. 

These results are new even for the ground state energy ($n=1$).
\end{abstract}

\maketitle

\vspace*{-12pt}

\section{\bf Introduction}

Eigenvalues of the Laplacian on a plane domain represent energy levels of a quantum particle in two dimensions. These eigenvalues are known in closed form only for special types of domain, such as disks and rectangles. Consequently a great deal of effort has gone into proving upper and lower bounds on eigenvalues in terms of geometric properties of the domain, such as area and perimeter. 

For example, the Rayleigh--Faber--Krahn inequality says that the ground state energy $\lambda_1$ of a quantum particle with Planck constant $\hbar$ confined to a region of area $A$ is bounded below according to 
\[
\lambda_1 A \geq \hbar^2 j_{0,1}^2 \pi
\]
where $j_{0,1}$ is the first zero of the Bessel function $J_0$.  Equality holds for the disk. Such bounds provide not only hard estimates, but also soft intuition, for they indicate how geometric attributes of the domain constrain the analytic information encoded in the eigenvalues. 

A magnetic field imposed transversely through the domain makes the energy levels even more difficult to determine theoretically. In this paper we aim to discover geometrically sharp estimates on such magnetic eigenvalues. 

Given a bounded plane domain $\Omega$, write $\lambda_j \big( \Omega , \hbar, \beta \big)$ for the $j$th eigenvalue of the Dirichlet Laplacian with Planck constant $\hbar>0$, under a constant transverse magnetic field $(0,0,\beta)$.  These Dirichlet eigenvalues are defined precisely in the next section, as are the Neumann eigenvalues $\mu_j$ (assuming the domain has Lipschitz boundary). The eigenvalue equations are
\begin{align}
\text{Dirichlet:} \qquad & 
\begin{cases}
(i\hbar \nabla + F)^2 u = \lambda u \quad \text{in $\Omega$} \\
u = 0 \quad \text{on $\partial \Omega$}
\end{cases} \label{eqbc1}
\\
\text{Neumann:} \qquad & 
\begin{cases}
(i\hbar \nabla + F)^2 u = \mu u \quad \text{in $\Omega$} \\
\vec{n}  \cdot (\hbar\nabla-iF)u = 0 \quad \text{on $\partial \Omega$}
\end{cases} \label{eqbc2}
\end{align}
where the vector potential $F(x)=\frac{\beta}{2}(-x_2,x_1)$ creates a field \text{$\nabla \times F = (0,0,\beta)$}. 

Let $A$ be the area and $I$ the moment of inertia of $\Omega$ about its centroid:
\[
  I = \int_\Omega |x-c|^2 \, dx
\]
with centroid $c = \frac{1}{A} \int_\Omega x \, dx$.

Our method works by linearly transforming a bounded plane domain $D$ having $N$-fold rotational symmetry, such as in Figure~\ref{fig:lineartrans}. The main result, Theorem~\ref{mag}, says that sums of magnetic eigenvalues decrease under linear transformation of the rotationally symmetric domain, provided the Planck constant and field strength are transformed suitably too. This theorem encompasses both Dirichlet and Neumann boundary conditions, and Robin--de Gennes conditions are treated in Theorem~\ref{magrobin}. The ability to handle all three boundary conditions is a striking feature of our method.

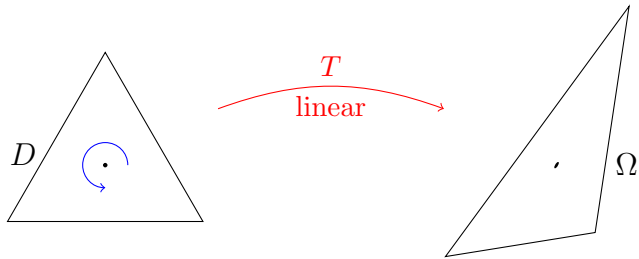
\begin{figure}[t]
  \begin{center}
    \begin{tikzpicture}[scale=1.5]
      \draw +(90:1) -- +(210:1)node [pos=0.6,left] {\large $D$}  --  +(330:1) -- cycle;
      \fill circle (0.02);
      \draw[blue,->] (0:0.2) arc (0:270:0.2);
      \draw[cm={1,1,0,1,(4,0)}] [rotate=60] +(110:1) -- +(230:1) -- +(350:1) node [pos=0.3,right] {\large $\Omega$} -- cycle;
      \fill [cm={1,1,0,1,(4,0)}] circle (0.02);
      \draw[red,->] (1,0.5) [in=160,out=20] to (3,0.5) node [pos=0.5,above] {$T$}
      node [pos=0.5,below] {linear};
    \end{tikzpicture}
  \end{center}
  \caption{A plane domain with $3$-fold rotational symmetry, and its image under a linear map $T$.}
  \label{fig:lineartrans}
\end{figure}

By expressing our main theorem geometrically, we will obtain:
\begin{corollary}
\label{2dim}
Assume $D$ is a bounded plane domain with rotational symmetry of order greater than or equal to $3$, and that $\Omega$ is the image of $D$ under a linear transformation. Fix $\beta \in \R$. 

Then for each $n \geq 1$, the normalized eigenvalue sum
\begin{equation} \label{flux_intro}
\Big[ \lambda_1 \big( \Omega , \hbar, \frac{\beta}{A} \big) + \dots + \lambda_n \big( \Omega , \hbar, \frac{\beta}{A} \big) \Big] \frac{A^3}{I}
\end{equation}
is maximal when $\Omega=D$, for each $n \geq 1$. 

Maximality holds also for dilated, rotated, and reflected images of $D$; and when $n=1$, every maximizing $\Omega$ is of that type.

The same conclusions hold for sums of Neumann eigenvalues.
\end{corollary}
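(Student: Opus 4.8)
The plan is to read off the corollary from the main comparison theorem, Theorem~\ref{mag}, by rewriting its conclusion in similarity-invariant form. Abbreviate the normalized sum in \eqref{flux_intro} as $Q_n(\Omega)=\big[\sum_{j=1}^{n}\lambda_j(\Omega,\hbar,\beta/A)\big]\,A^{3}/I$. The strategy is to show that $Q_n$ is invariant under the full similarity group and that, once the field and Planck constant are absorbed into the flux normalization $\beta/A$ and the inertia normalization $A^{3}/I$, the statement ``the eigenvalue sum decreases under a linear map of the symmetric domain'' becomes literally the assertion $Q_n(\Omega)\le Q_n(D)$.

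First I would record the invariances of $Q_n$. Since the field $(0,0,\beta)$ is transverse, a rigid rotation of $\Omega$ leaves every magnetic eigenvalue unchanged, while a reflection reverses the field to $-\beta$; but the field-$\beta$ and field-$(-\beta)$ operators are complex conjugates and hence isospectral, so reflections preserve the spectrum too. As $A$ and $I$ are rigid-motion invariant, so is $Q_n$. For a dilation $x\mapsto tx$ one checks $A\mapsto t^{2}A$ and $I\mapsto t^{4}I$, while the flux-normalized operator obeys $\lambda_j(t\Omega,\hbar,\beta/(t^{2}A))=t^{-2}\lambda_j(\Omega,\hbar,\beta/A)$; thus the factor $A^{3}/I\mapsto t^{2}A^{3}/I$ exactly cancels the eigenvalue scaling and $Q_n$ is dilation invariant. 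These invariances already give the asserted maximality for every dilated, rotated, and reflected copy of $D$.

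Next comes the geometric heart of the matter. Placing the centroid at the origin, the image $\Omega=T(D)$ has area $A=\lvert\det T\rvert\,A_D$ and second-moment matrix $M_\Omega=T M_D T^{\top}$, whence $I=\tr M_\Omega=\tr\!\big(M_D\,T^{\top}T\big)$. Here the hypothesis of rotational symmetry of order $\ge 3$ is decisive: $M_D$ commutes with rotation by $2\pi/N$, and for $N\ge 3$ the only such matrices are scalar, so $M_D=\tfrac12 I_D\,\mathrm{Id}$ and therefore $I=\tfrac12 I_D\,\tr(T^{\top}T)$. Substituting these expressions for $A$ and $I$ into $Q_n(\Omega)$ turns the transformed Planck constant and field strength supplied by Theorem~\ref{mag} into exactly the flux and inertia normalizations appearing in \eqref{flux_intro}, so the inequality of Theorem~\ref{mag} reads $Q_n(\Omega)\le Q_n(D)$ for every $n\ge1$. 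Equivalently, one may first use the singular value decomposition $T=R_1\Lambda R_2$ together with the invariances above to reduce to a volume-preserving diagonal stretch $\Lambda$ of the rotated symmetric domain $R_2 D$, and apply Theorem~\ref{mag} there. Since that theorem covers both boundary conditions, the Neumann statement requires no change.

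I expect the main obstacle to be precisely this bookkeeping: I must check that the rescaling of $\hbar$ and $\beta$ dictated by Theorem~\ref{mag} leaves no residual power of $\det T$ or $\tr(T^{\top}T)$ beyond what is accounted for by $A^{3}/I$ and $\beta/A$, so that the geometric normalization is the unique scale- and affine-compatible one. The remaining subtlety is the uniqueness claim for $n=1$. For this I would appeal to the equality statement in Theorem~\ref{mag}; the geometric driver is the bound $I=\tfrac12 I_D\,\tr(T^{\top}T)=\tfrac12 I_D(s_1^{2}+s_2^{2})$, where $s_1,s_2$ are the singular values of $T$. After using dilation invariance to normalize $s_1 s_2=1$, the arithmetic--geometric mean inequality gives $I\ge I_D$ with equality only when $s_1=s_2$, which forces $T$ to be a similarity and hence $\Omega$ to be a dilated, rotated, or reflected image of $D$.
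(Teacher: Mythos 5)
Your route is the paper's: apply Theorem~\ref{mag}, pull the factor $2/\lVert T^{-1}\rVert_{HS}^2$ out of the eigenvalues by quadratic homogeneity in $(\hbar,\beta)$, and identify $2/\lVert T^{-1}\rVert_{HS}^2$ and $1/|\det T|$ with the geometric ratios $\frac{A^3}{I}\big(T(D)\big)\big/\frac{A^3}{I}(D)$ and $A(D)/A\big(T(D)\big)$. The paper simply cites \cite[Lemma~5.3]{LS11b} for the inertia identity, whereas you re-derive it; your Schur-type argument that the symmetry of order $\geq 3$ forces the second-moment matrix $M_D$ to be scalar is exactly right. But your derivation contains a computational error, and it lands precisely on the point you flagged as the main obstacle: the second-moment matrix transforms with a Jacobian factor, $M_\Omega = |\det T|\, T M_D T^\dagger$, not $M_\Omega = T M_D T^\dagger$. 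With your formula one gets $I_\Omega = \tfrac12 I_D \lVert T\rVert_{HS}^2$ and hence $\frac{A^3}{I}\big(T(D)\big) = \frac{2|\det T|}{\lVert T^{-1}\rVert_{HS}^2}\frac{A^3}{I}(D)$, so a residual power of $\det T$ survives and the bookkeeping does \emph{not} close. With the corrected formula $I_\Omega = \tfrac12 |\det T|\, I_D \lVert T\rVert_{HS}^2$, together with the $2\times 2$ identity $\lVert T^{-1}\rVert_{HS} = \lVert T\rVert_{HS}/|\det T|$, one gets exactly $\frac{A^3}{I}\big(T(D)\big) = \frac{2}{\lVert T^{-1}\rVert_{HS}^2}\frac{A^3}{I}(D)$, and then the inequality, the invariance statements, and the Neumann case all follow as you describe.

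The second problem is your uniqueness argument for $n=1$. You correctly say you would invoke the equality statement of Theorem~\ref{mag}, but the AM--GM gloss you attach to it is not a valid mechanism and should be deleted. Knowing that, after normalizing $s_1 s_2 = 1$, one has $I\big(T(D)\big) \geq I(D)$ with equality iff $s_1 = s_2$ tells you nothing about maximizers of \eqref{flux_intro}: a priori a maximizing $\Omega$ could have $s_1 \neq s_2$, with the smaller geometric factor $A^3/I$ compensated by a larger eigenvalue; purely geometric inequalities cannot rule this out. What rules it out is spectral rigidity: equality in the corollary is, after the bookkeeping above, literally equality in Theorem~\ref{mag} for $n=1$, and the hard analysis of Section~\ref{equality_proof} (the ODE/frequency argument) shows this forces $T$ to be a scalar multiple of an orthogonal matrix. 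Note also that this rigidity requires $\beta \neq 0$, exactly as in Theorem~\ref{mag}; for $\beta = 0$ rectangles can also be extremal, so no geometric shortcut could have worked. In summary: keep the appeal to Theorem~\ref{mag}'s equality case, excise the AM--GM reasoning, and insert the Jacobian factor, and your proof coincides with the paper's.
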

The corollary is proved in Section~\ref{2dim_proof}. 

Notice the total flux of the magnetic field is the same for each domain, in the corollary, since multiplying the field strength $\beta/A$ by area $A$ gives $\beta$.

In particular, the corollary implies that the normalized eigenvalue sum \eqref{flux_intro} is maximal among triangles for the equilateral, maximal among parallelograms for the square, and maximal among ellipses for the disk. To apply this result in practice, one would like explicit formulas for the eigenvalues of the equilateral triangle and the square --- but we do not know whether such formulas exist for $\beta \neq 0$.

Incidentally, for triangles the moment of inertia can be calculated in terms of the side lengths as $I = (l_1^2+l_2^2+l_3^2)A/36$, while for a parallelogram with adjacent side lengths $l_1,l_2$, the moment of inertia equals $I = (l_1^2+l_2^2)A/12$. 

Two reasons for studying eigenvalue sums are that the sum represents the energy needed to fill the lowest $n$ states under the Pauli exclusion principle, and that summability methods improve the behavior of high eigenvalues, which are difficult to study directly. An example of the improvement provided by summation is that although the P\'olya conjecture remains open, claiming the Weyl asymptotic provides a lower bound for each eigenvalue of the Dirichlet Laplacian, Li and Yau  \cite{LY83}  were able to show that \emph{sums} of these eigenvalues are indeed bounded below by the analogous Weyl asymptotic.

\subsection*{Prior work}
The topic originates with an old result of P\'{o}lya on the  fundamental tone of a membrane (as stated in \cite{P52} and proved in \cite[Chapter IV]{PS53}). He established the case $n=1$ of this paper, for vanishing magnetic field; that is, he studied the first eigenvalue of the usual Dirichlet Laplacian. His method relied heavily on uniqueness of the fundamental mode, and thus it failed to extend to Neumann boundary conditions, higher eigenvalue sums or the magnetic spectrum, all of which we treat in this paper.

Our work relies on the method of Rotations and Tight Frames, developed recently by Laugesen and Siudeja \cite{LS11b} for finding upper bounds on eigenvalue sums of the Laplacian. We extend this technique to handle magnetic fields. The resulting theorems are new even when $n=1$. 

The magnetic situation presents two new challenges. First, the interactions of the momentum (or gradient) operator with the magnetic field must be averaged over the $N$-fold rotations of $D$; see the quantities $Q_2$ and $Q_3$ in Section~\ref{mag_proof}. Second, a new proof must be found for the equality statement (see Section~\ref{equality_proof}), because the eigenfunction $u$ is complex-valued in the magnetic situation, and so $u$ and $\overline{u}$ are non-equal in formulas \eqref{eigen1new} and \eqref{eigen2new}.

The only prior geometrically sharp estimate we know, for magnetic eigenvalues, is the Faber--Krahn type lower bound of L. Erd\"{o}s \cite{E96}. That result says the normalized ground state energy $\lambda_1(\Omega,\hbar,\frac{\beta}{A}) A$ is minimal for the disk among all two dimensional domains. Notice the total flux is once again the same, for each domain considered. 

The lower bound of Erd\"{o}s relates to our upper bound in Corollary~\ref{2dim} as follows. One can rewrite our functional in \eqref{flux_intro} as $( \lambda_1 + \dots + \lambda_n )A \cdot  A^2/I$. Thus it arises from multiplying a Faber--Krahn term $( \lambda_1 + \dots + \lambda_n )A$ that is normalized by area with a purely geometric, scale-invariant term $A^2/I$ that penalizes long, thin domains. Erd\"{o}s studied solely the Faber--Krahn term, and only for $n=1$, although his work does apply to arbitrary domains. We would like to extend our work to arbitrary domains too, but the task seems challenging even in the absence of magnetic field \cite[{\S}4]{LS11b}.

The scarcity of isoperimetric type inequalities for magnetic eigenvalues contrasts with a profusion of results in the nonmagnetic setting, where one may consult the surveys by Ashbaugh and Benguria \cite{AB07} or Benguria and Linde \cite{BL08}, and the monographs of Bandle \cite{B80}, Henrot \cite{He06}, Kawohl \cite{K85}, Kesavan \cite{K06} and P\'{o}lya--Szeg\H{o} \cite{PS51}. In particular, triangular domains have been much studied \cite{AF11,F07,LPS11,LS09,LS11a,LS11b,LR10,S10}. Our aim in this paper is to begin developing a magnetic isoperimetric theory of comparable richness.

\subsection*{Asymptotically sharp inequalities (semi-classical constants)}
Our work in this paper is ``geometrically sharp'', since an extremal domain exists for each fixed $n$. The Li--Yau inequality mentioned above (for eigenvalue sums in the absence of magnetic field) is not geometrically sharp, but is asymptotically sharp since equality holds for each domain in the limit as $n \to \infty$, by the Weyl asymptotic. Other asymptotically sharp bounds on eigenvalue sum functionals were studied recently by Dolbeault, Geisinger, Harrell, Hermi, Kr\"oger, Laptev, Loss, and Weidl (see \cite{DLL08,GLW10,HH08,Kr94} and references therein).

Fewer such estimates are known with magnetic field, and some natural conjectures turn out to be false. Notably, the magnetic P\'{o}lya conjecture has been disproved by Frank, Loss and Weidl \cite{FLW09}, who constructed counterexamples even for square domains. A magnetic Li--Yau inequality nonetheless holds true for eigenvalue sums, by work of Erd\"{o}s, Loss and Vougalter \cite{ELV00} extending results of Laptev and Weidl \cite{LW00}. 

\subsection*{Higher dimensions} We hope to extend our inequalities to higher dimensions in a later paper. The results will be more complicated, for two reasons. First, the moment of inertia must be evaluated on an ``inverse'' domain, as seen already with vanishing magnetic field \cite{LS11c}. Second, the field is no longer perpendicular to the domain.

No sharp Faber--Krahn lower bound is known for the ground state energy of the magnetic Laplacian, in higher dimensions. The lower bound of Erd\"{o}s in the plane remains to be extended.

\section{\bf Assumptions and notation}
\label{notation}

\subsection*{Vector potentials} Throughout the paper we consider a constant magnetic field on the plane, written
\[
B=(0,0,\beta)
\]
for some fixed $\beta \in \R$. The field can be generated from the curl of a suitable vector potential $F \in C^\infty(\R^2;\R^2)$, as
\[
B = \nabla \times F = (0,0,\partial_1 F_2 - \partial_2 F_1).
\]
Most commonly we employ the potential
\[
F(x) = \frac{\beta}{2} (-x_2,x_1) ,
\]
but other choices are allowed too, such as $\beta(-x_2,0)$ or $\beta(0,x_1)$. This non-uniqueness of the potential illustrates the principle of \emph{gauge invariance}, whereby the magnetic field is unchanged by adding a gradient vector to the potential, because the curl of a gradient equals zero. 

\subsection*{Eigenvalues}
Consider a bounded plane domain $\Omega$. Denote by $\lambda_j(\Omega,\hbar,\beta)$ the \emph{Dirichlet} eigenvalues of the Laplacian on $\Omega $ with Planck constant $\hbar>0$ and constant magnetic field $B=(0,0,\beta)$. These eigenvalues form an increasing sequence 
\[
0 < \lambda_1 \leq \lambda_2 \leq \lambda_3 \leq \dots \to \infty
\]
and are determined from the Rayleigh quotient
\[
R[u]  = \frac{\int_ \Omega |(i\hbar \nabla+F) u|^2 \, dx}{\int_ \Omega |u|^2 \, dx} \qquad \text{where\ } u \in H^1_0(\Omega;\C) , \ u \not \equiv 0 ,
\]
by the usual minimax variational principles. In particular, $\lambda_1 = \min_u R[u]$. Note the gradient operator $\nabla$ and the vector potential $F$ are regarded as row vectors. 

The existence of these eigenvalues, and of a corresponding orthonormal basis of smooth eigenfunctions, follows from standard elliptic theory \cite[Corollary III.7.8]{S79}. The key facts are that $H^1_0$ imbeds compactly into $L^2$ and the Rayleigh quotient can be written $R[u]= Q[u,u]/ \langle u , u \rangle_{L^2(\Omega)} $ where the sesquilinear form 
\[
Q[u,v] = \big\langle (i\hbar \nabla+F) u , (i\hbar \nabla+F) v \big\rangle_{L^2(\Omega)}
\]
satisfies the following three properties. It is: (i) conjugate symmetric, (ii) continuous as a function of $u,v \in H^1_0(\Omega;\C)$, and (iii) $H^1$-coercive after adding a multiple of the $L^2$-inner product (meaning $Q[u,u] + c_1 \lVert u \rVert_{L^2}^2 \geq c_2 \lVert u \rVert_{H^1}^2$ for all $u \in H^1(\Omega;\C)$, for some $c_1,c_2>0$). 

The eigenfunction equation \eqref{eqbc1} follows from the Euler--Lagrange condition for a critical point of the Rayleigh quotient. Note the eigenfunctions depend on the choice of vector potential, but the eigenvalues do not: they depend merely on the field strength parameter $\beta$, by gauge invariance (see Lemma~\ref{indeppot}). Thus our notation $\lambda(\Omega,\hbar,\beta)$ need only indicate the dependence of the eigenvalue on the field parameter $\beta$, not the vector potential. 

The first eigenvalue is positive by diamagnetic comparison with the first eigenvalue of the Dirichlet Laplacian ($\beta=0$), or else by Lemma~\ref{poseigen}. (Also $\lambda_1(\Omega,\hbar,\beta) \geq \lambda_1(\R^2,\hbar,\beta) = \hbar |\beta|$ by domain monotonicity, but we will not need this fact.) 

The \emph{Neumann} eigenvalues $\mu_j(\Omega,\hbar,\beta)$ arise from the same Rayleigh quotient $R[u]$, but the class of trial functions is larger, namely $u \in H^1(\Omega;\C)$; we make the standing assumption that the domain has Lipschitz boundary so that $H^1$ imbeds compactly into $L^2$. The first Neumann eigenvalue $\mu_1$ is positive whenever $\beta \neq 0$, by Lemma \ref{poseigen}. Note the boundary conditions in \eqref{eqbc2} arise \emph{naturally} in the Neumann case, from the Euler--Lagrange condition.

The magnetic eigenvalues are invariant with respect to rigid motions and dilations of the domain. See Appendix~\ref{invariance} for precise statements.

\subsection*{Matrix notation}
Given a real or complex matrix $M$, write its Hilbert--Schmidt norm as
\[
\lVert M \rVert_{HS}=\big( \sum_{j,k} |M_{jk}|^2 \big)^{\! 1/2} = (\tr M M^\dagger)^{\! 1/2}
\]
where $M^\dagger$ denotes the complex conjugate of the transpose matrix. 

For example, the identity matrix has Hilbert--Schmidt norm equal to $\sqrt{2}$.

\section{\bf The main results: sharp upper bounds on eigenvalue sums}
\label{results}

Fix the Planck constant $\hbar>0$ and field parameter $\beta \in \R$, throughout this section.

\subsection*{Dirichlet and Neumann eigenvalues}
We start by linearly transforming a rotationally symmetric plane domain $D$. The Planck constant and field strength are transformed suitably too, in order to obtain a sharp result. 

\begin{theorem}
\label{mag}
If the bounded plane domain $D$ has rotational symmetry of order greater than or equal to $3$, then
\[
\sum\limits_{j=1}^{n} \lambda_j\Big({T(D)}, \frac{\sqrt{2}}{\lVert T^{-1} \rVert_{HS}} \hbar, \frac{\sqrt{2}}{\lVert T^{-1} \rVert_{HS}} \frac{\beta}{|\det T|} \Big) \leq \sum\limits_{j=1}^{n} \lambda_j\big(D,\hbar,\beta)
\]
for each $n \geq 1$ and each invertible linear transformation $T$ of $\R^2$.

Equality holds if $T$ is a scalar multiple of an orthogonal transformation. For nonzero magnetic fields ($\beta \neq 0$), equality holds for the first eigenvalue ($n=1$) if and only if $T$ is a scalar multiple of an orthogonal transformation.

The same results hold for Neumann eigenvalues.
\end{theorem}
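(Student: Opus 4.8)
The plan is to use the method of Rotations and Tight Frames. The key idea is that the $N$-fold rotational symmetry of $D$ (with $N \geq 3$) means the rotation matrices $R_k$ (rotation by $2\pi k/N$) form a *tight frame* for $\mathbb{R}^2$: averaging $R_k^\dagger A R_k$ over $k$ replaces any matrix $A$ by a scalar multiple of the identity (namely $\frac{1}{2}(\tr A) I$). I want to take the eigenfunctions $u_1, \dots, u_n$ of the transformed domain $T(D)$ — or rather, pulled back to $D$ — and build trial functions for the Rayleigh quotient on $D$ by composing with the rotations, thereby bounding the eigenvalue sum on $D$ from below by the (averaged, hence isotropized) quotient coming from $T(D)$. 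The normalization of $\hbar$ and $\beta$ by $\sqrt{2}/\lVert T^{-1}\rVert_{HS}$ and $1/|\det T|$ is precisely what makes the averaged magnetic Rayleigh quotient on $D$ match that on $T(D)$ after the frame symmetrization.

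<br>

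Concretely, I would proceed as follows. First I would set up the change of variables $x = T y$ mapping $D$ to $T(D)$, and compute how the magnetic Rayleigh quotient $R[u]$ transforms: the gradient picks up a factor $T^{-1}$, the vector potential $F$ transforms according to its linear structure, and the area element contributes $|\det T|$. This produces a modified quadratic form on $D$ involving the matrix $T^{-1}(T^{-1})^\dagger$ in the gradient term, together with magnetic cross-terms (the quantities $Q_2$ and $Q_3$ alluded to in the introduction). Second, I would exploit the minimax principle: to bound $\sum_{j=1}^n \lambda_j(D,\hbar,\beta)$ from below, I need an $n$-dimensional trial subspace in $H^1_0(D;\mathbb{C})$ (or $H^1$ for Neumann). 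Following the tight-frame technique, I would take the $n$ eigenfunctions on the transformed domain, and for each rotation $R_k$ form rotated copies, then average the resulting Rayleigh quotient contributions over the $N$ rotations. Third, I would invoke rotational symmetry of $D$ to apply the tight-frame identity, which collapses the anisotropic matrix $T^{-1}(T^{-1})^\dagger$ to the scalar $\frac{1}{2}\lVert T^{-1}\rVert_{HS}^2$ times the identity, and similarly isotropizes the magnetic terms. Matching scalars against the normalization factors yields the claimed inequality.

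<br>

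The genuinely new difficulty, which the introduction flags, is twofold. The \emph{first} obstacle is handling the magnetic cross-terms: unlike the nonmagnetic case, the quadratic form contains first-order terms coupling $\nabla u$ to $F u$, and these must be averaged correctly over the rotations so that the off-diagonal (field-dependent) contributions combine into the properly normalized flux $\beta/|\det T|$. I expect this averaging — verifying that $Q_2$ and $Q_3$ transform as needed under the frame symmetrization — to be the main technical labor. The \emph{second} obstacle is the equality case for $n=1$. Here I cannot simply argue that the trial function equals the true eigenfunction up to a real scalar, because the magnetic eigenfunction $u$ is genuinely complex-valued, so $u$ and $\overline{u}$ are distinct; the standard nonmagnetic rigidity argument breaks down. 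I anticipate that establishing equality forces each rotated trial function to be a true eigenfunction, and then a separate argument (tracing through the frame identity's equality conditions together with the structure of the ground state) is needed to conclude that $T$ must be a scalar multiple of an orthogonal transformation.

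<br>

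Finally, the Neumann case should follow by the identical argument with the trial-function class enlarged from $H^1_0(D;\mathbb{C})$ to $H^1(D;\mathbb{C})$; the rotations preserve this larger space just as well, and the natural boundary condition never enters the variational estimate, so no separate treatment is required beyond noting the compactness of the $H^1 \hookrightarrow L^2$ imbedding guaranteed by the Lipschitz hypothesis.
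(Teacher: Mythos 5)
Your toolkit is the right one --- tight-frame averaging over the rotations of $D$, transformation of the magnetic Rayleigh quotient, separate treatment of the gradient, cross, and potential terms --- but the direction of your transplantation is backwards, and as stated this is fatal. You propose to take eigenfunctions of $T(D)$, pull them back to $D$, compose with rotations, and thereby ``bound the eigenvalue sum on $D$ from below.'' Two things go wrong. First, the Rayleigh--Poincar\'e principle characterizes $\sum_{j\leq n}\lambda_j$ as a \emph{minimum} over orthogonal trial families, so inserting trial functions can only produce \emph{upper} bounds on the domain where those functions live; no trial subspace in $H^1_0(D;\C)$ can bound $\sum_j \lambda_j(D,\hbar,\beta)$ from below. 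Second, even if you reinterpret your plan as an upper bound for $\sum_j\lambda_j(D,\hbar,\beta)$ in terms of data on $T(D)$ (which in any case points the wrong way for the theorem), the rotation averaging is vacuous in your direction: a trial function on $D$ built from $u$ on $T(D)$ must have the form $v(x)=u(TU_m x)$, and then every appearance of $U_m$ cancels identically in the quadratic form, since $(TU_m)(TU_m)^\dagger = TT^\dagger$, since $U_m M U_m^\dagger = M$ (the vector-potential matrix $M$ is a scalar times rotation by $\pi/2$, hence commutes with rotations), and since $|MU_m^\dagger T^{-1}y| = \tfrac12 |T^{-1}y|$. The anisotropic matrices $TT^\dagger$ and $TMT^{-1}$ survive untouched, the tight-frame identity never gets to act, and the resulting expression cannot be identified with any eigenvalue sum on $T(D)$. (Indeed the matrix $T^{-1}T^{-\dagger}$ you say should be collapsed only arises when the trial functions live on $T(D)$, not on $D$; your setup is internally inconsistent on this point.)

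The argument must be run the other way, which is what the paper does: take orthonormal \emph{eigenfunctions} $u_j$ on the symmetric domain $D$ and use $v_j = u_j \circ U_m \circ T^{-1}$ as \emph{trial functions} on $E=T(D)$. Then Rayleigh--Poincar\'e on $E$ gives $\sum_j \lambda_j(E,b\hbar,c\beta) \leq \sum_j R_E[v_j]$ with $b=\sqrt2/\lVert T^{-1}\rVert_{HS}$ and $c=b/|\det T|$; now the Jacobian is $U_m T^{-1}$, so the rotations sit on the \emph{outside} of each sandwich --- the form involves $U_m T^{-1}T^{-\dagger}U_m^\dagger$, $U_m T^{-1}MTU_m^\dagger$ and $U_m T^\dagger M^\dagger M T U_m^\dagger$ --- and averaging over $m$ collapses these, by the tight-frame lemma, to $\tfrac12 \lVert T^{-1}\rVert_{HS}^2\,\mathrm{Id}$, $\tfrac12\bigl(T^{-1}MT-(T^{-1}MT)^\dagger\bigr)$ and $\tfrac12\lVert MT\rVert_{HS}^2\,\mathrm{Id}$ respectively. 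With the normalizations $b,c$ the averaged quotient then reconstitutes exactly the Rayleigh quotient of $u_j$ for the \emph{original} problem on $D$, whose sum equals $\sum_j\lambda_j(D,\hbar,\beta)$ precisely because the $u_j$ are eigenfunctions there; this final identification is unavailable in your direction, where the transplanted functions are not eigenfunctions of the averaged form. Separately, your sketch of the $n=1$ equality case only names the difficulty: the paper's actual argument (reduce to diagonal $T$ with $t_1\neq t_2$, combine the two eigenvalue equations to solve explicit ODEs for the gauge-transformed eigenfunction, and compare incommensurable frequencies to force $u\equiv 0$) still has to be supplied.
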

The proof is in Sections~\ref{mag_proof} and \ref{equality_proof}. 

The rotationally symmetric domain $D$ in the theorem need not be convex, or a regular polygon, or have any axis of symmetry. For example, it could be shaped like a three-bladed propeller.

The equality statement for the first eigenvalue under \emph{zero} magnetic field is more complicated than in the theorem, because rectangles (which do not possess rotational symmetry) can also be extremal \cite[Theorem~3.1]{LS11b}. On the other hand, the equality case in this paper requires new ideas too, in order to handle the magnetic interaction terms.

\subsection*{Robin--de Gennes eigenvalues}

Analogous results can be established for the boundary condition of the third kind. This boundary condition was studied in thermodynamics by Robin and in superconductivity by de Gennes. First we need some definitions. Denote the \emph{Robin--de Gennes} eigenvalues by $\rho_j(\Omega,\hbar,\beta,\sigma)$ where the constant $\sigma>0$ is the Robin parameter. The Rayleigh quotient is
\begin{equation} \label{robinrayleigh}
R[u] = \frac{\int_ \Omega |(i \hbar \nabla+F) u|^2 \, dx + \sigma \int_{\partial \Omega} |u|^2 \, ds}{\int_ \Omega |u|^2 \, dx} \qquad \text{where\ } u \in H^1(\Omega;\C) , \ u \not \equiv 0 .
\end{equation}
As in the Neumann case, we assume the domain has Lipschitz boundary so that the spectrum is well defined. 

The eigenvalue equation and boundary conditions are easily deduced from the variational characterization of the eigenvalues. They are:
\[
\text{Robin-de Gennes:} \qquad 
\begin{cases}
(i\hbar \nabla + F)^2 u = \rho u \quad \text{in $\Omega $,} \\
\vec{n}  \cdot \hbar (\hbar\nabla-iF)u + \sigma u= 0 \quad \text{on $\partial \Omega $.}
\end{cases}
\]
Notice the Robin--de Gennes case reduces to Neumann when $\sigma=0$, and reduces formally to the Dirichlet case when $\sigma=\infty$. 

The first eigenvalue $\rho_1$ is positive for each value of $\beta$, by Lemma~\ref{poseigen}. 

The Robin--de Gennes eigenvalue sums are bounded sharply by the next theorem. 
\begin{theorem}
\label{magrobin}
If $D$ has rotational symmetry of order greater than or equal to $3$, then
\[
\sum\limits_{j=1}^{n} \rho_j\Big({T(D)}, \frac{\sqrt{2}}{\lVert T^{-1} \rVert_{HS}} \hbar, \frac{\sqrt{2}}{\lVert T^{-1} \rVert_{HS}} \frac{\beta}{|\det T|}, \frac{\sqrt{2}}{\lVert \, T^{-1} \rVert_{HS}} \sigma \Big) \leq \sum\limits_{j=1}^{n} \rho_j\big(D,\hbar,\beta,\sigma)
\]
for each $n \geq 1$ and each invertible linear transformation $T$ of $\R^2$.

Equality holds if $T$ is a scalar multiple of an orthogonal transformation. For nonzero magnetic fields ($\beta \neq 0$), equality holds for the first eigenvalue ($n=1$) if and only if $T$ is a scalar multiple of an orthogonal transformation.
\end{theorem}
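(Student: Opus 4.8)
The plan is to run the Rotations and Tight Frames argument of Sections~\ref{mag_proof} and \ref{equality_proof}, carrying along the extra Robin boundary term. Let $N\ge 3$ be the order of rotational symmetry of $D$, write $\mathcal{R}$ for rotation by $2\pi/N$ so that $\mathcal{R}(D)=D$, and fix $L^2$-orthonormal eigenfunctions $u_1,\dots,u_n$ realizing $\rho_1(D,\hbar,\beta,\sigma),\dots,\rho_n(D,\hbar,\beta,\sigma)$. As trial functions on $\Omega=T(D)$ I would take, for each $k=0,\dots,N-1$, the family $w_{j,k}(y)=u_j\!\big(\mathcal{R}^k T^{-1}y\big)$ modified by the gauge phase prescribed in the proof of Theorem~\ref{mag}, so that the symmetric-gauge potential $F'$ for the transformed field $\beta'=\frac{\sqrt{2}}{\lVert T^{-1}\rVert_{HS}}\frac{\beta}{|\det T|}$ is matched after pullback. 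Since $\langle w_{j,k},w_{j',k}\rangle_{L^2(\Omega)}=|\det T|\,\delta_{jj'}$ for each fixed $k$, the normalized $\{w_{j,k}\}_{j=1}^n$ form an orthonormal $n$-tuple, so the variational principle gives $\sum_{j=1}^n\rho_j(\Omega,\dots)\le\sum_{j=1}^n R_\Omega[w_{j,k}]$ for every $k$; averaging over $k$ yields $\sum_{j=1}^n\rho_j(\Omega,\dots)\le\frac{1}{N}\sum_{k=0}^{N-1}\sum_{j=1}^n R_\Omega[w_{j,k}]$, the quantity I would estimate.

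Changing variables by $y=Tx$ reduces each integral to one over $D$, with common denominator $\int_\Omega|w_{j,k}|^2\,dy=|\det T|$. The interior contribution is handled exactly as for Theorem~\ref{mag}: the gradient pullback produces the matrix $T^{-1}T^{-\dagger}$, the magnetic cross terms produce the quantities $Q_2,Q_3$, and averaging over the $N\ge 3$ rotations invokes the tight-frame identity $\frac{1}{N}\sum_k \mathcal{R}^k M \mathcal{R}^{-k}=\tfrac12(\tr M)\,I$ for symmetric $M$. With $M=T^{-1}T^{-\dagger}$ this replaces the anisotropic matrix by the scalar $\tfrac12\lVert T^{-1}\rVert_{HS}^2$, and the choice $\hbar'=\frac{\sqrt{2}}{\lVert T^{-1}\rVert_{HS}}\hbar$ (with the stated $\beta'$) makes the averaged interior form reproduce $\int_D|(i\hbar\nabla+F)u_j|^2\,dx$ exactly. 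The genuinely new ingredient is the boundary term. Writing $\sigma'=\frac{\sqrt{2}}{\lVert T^{-1}\rVert_{HS}}\sigma$, on $\partial\Omega=T(\partial D)$ the arclength element stretches by $|T\tau|$, where $\tau$ is the unit tangent to $\partial D$, so after substituting $\tilde x=\mathcal{R}^k x$ one finds
\[
\frac{1}{N}\sum_{k}\sigma'\!\int_{\partial\Omega}|w_{j,k}|^2\,ds
=\sigma'\!\int_{\partial D}|u_j|^2\Big(\tfrac1N\textstyle\sum_k|T\mathcal{R}^{-k}\tau|\Big)\,ds .
\]
By Jensen's inequality and the tight-frame identity applied to $T^\dagger T$, the averaged stretch obeys $\frac1N\sum_k|T\mathcal{R}^{-k}\tau|\le\big(\frac1N\sum_k|T\mathcal{R}^{-k}\tau|^2\big)^{1/2}=\tfrac{1}{\sqrt{2}}\lVert T\rVert_{HS}$. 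Since $\lVert T^{-1}\rVert_{HS}=\lVert T\rVert_{HS}/|\det T|$ for $2\times2$ matrices, the averaged boundary numerator is at most $|\det T|\,\sigma\int_{\partial D}|u_j|^2\,ds$, hence contributes $\sigma\int_{\partial D}|u_j|^2\,ds$ after dividing by $|\det T|$. Summing interior and boundary bounds gives $\frac1N\sum_k\sum_j R_\Omega[w_{j,k}]\le\sum_j R_D[u_j]=\sum_{j=1}^n\rho_j(D,\hbar,\beta,\sigma)$, which is the asserted inequality.

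For equality, the forward direction is a direct check: when $T=rU$ with $U$ orthogonal one has $\frac{\sqrt{2}}{\lVert T^{-1}\rVert_{HS}}=r$ and $|\det T|=r^2$, so $\Omega$ is a dilation and rotation of $D$ and the transformed parameters are precisely those produced by the dilation and rigid-motion invariances of Appendix~\ref{invariance}. The converse, equality for $n=1$ when $\beta\ne0$, is where I expect the main obstacle. One must trace equality back through each step: equality in the Jensen bound forces $|T\mathcal{R}^{-k}\tau|$ to be independent of $k$ along the support of $|u_1|$ on $\partial D$, while equality in the interior tight-frame step forces the traceless part of $T^{-1}T^{-\dagger}$ to vanish against $\nabla u_1$ and against the magnetic cross terms; together these should force $T$ to be a scalar multiple of an orthogonal transformation. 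The real difficulty, flagged already in the introduction, is that $u_1$ is complex-valued, so $u_1$ and $\overline{u_1}$ are distinct and the real-variable rigidity of the nonmagnetic theory does not transfer directly; I would adapt the complex equality analysis of Section~\ref{equality_proof}, verifying that the new boundary equality condition is consistent with, and subsumed by, the interior one.
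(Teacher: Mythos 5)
Your proof of the inequality is correct and is essentially the paper's own argument: the same transplanted trial functions $u_j \circ U_m \circ T^{-1}$, the same exact tight-frame averaging of the interior terms as in the proof of Theorem~\ref{mag}, and for the boundary term the same chain --- arclength stretch factor $|T\tau|$, rotational covariance $\tau(U^{-1}x)=U^{-1}\tau(x)$, Cauchy--Schwarz (your Jensen step), Lemma~\ref{variant}, and the $2\times 2$ identity $\lVert T^{-1}\rVert_{HS}=\lVert T\rVert_{HS}/|\det T|$ --- so that the transformed Robin parameter $\sqrt{2}\,\sigma/\lVert T^{-1}\rVert_{HS}$ reproduces exactly $\sigma\int_{\partial D}|u_j|^2\,ds$.

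Two clarifications on the rest. First, your trial functions need no gauge-phase correction: the paper uses the bare compositions, and the mismatch between the pulled-back potential and the symmetric gauge on $D$ is precisely what the rotational averaging of $Q_2$ and $Q_3$ repairs; in any case a unimodular phase would not change the boundary integrand $|w|^2$, so this remark is harmless but spurious. Second, and more importantly, your plan for the equality converse overcomplicates the logical structure. The interior tight-frame averaging is an \emph{identity}, not an inequality, so there is no ``equality in the interior tight-frame step'' to exploit; the only inequalities in the chain are the Rayleigh--Poincar\'e principle \eqref{rayleighprinc} and the boundary Cauchy--Schwarz. Consequently, equality for $n=1$ forces each trial function --- in particular $v=u_1\circ T^{-1}$, arising from $U_N=\mathrm{Id}$ --- to be a first Robin eigenfunction on $T(D)$, hence to satisfy the interior equation $(ib\hbar\nabla+cF)^2v=\rho_1 v$. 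From there the contradiction argument of Section~\ref{equality_proof} (combining the two interior eigenequations for $t_1\neq t_2$, solving the resulting ODEs on rectangles, and concluding $u_1\equiv 0$ when $\beta\neq 0$) applies verbatim, because it is purely local and never invokes the boundary condition. So the boundary equality condition need not be analyzed at all --- there is nothing to ``verify consistent and subsumed''; the interior rigidity alone finishes the proof, which is exactly why the paper disposes of the equality statement in a single sentence.
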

The proof is in Section~\ref{magrobin_proof}. 

One can also express the theorem in geometric terms, similarly to Corollary~\ref{2dim}. (See the nonmagnetic version in \cite[Corollary~3.4]{LS11b}.)

\section{\bf Tight frame identities}
\label{tightframe_sec}

When proving the main theorem, we will need to average certain matrices with respect to conjugation by the rotation group of order $N$. Let the matrix $U_m$ represent rotation of the plane by angle $2\pi m/N$. Write $\mathrm{Id}$ for the identity matrix. 
\begin{lemma}
\label{tightframe}
If $N \geq 3$, then for every $2 \times 2$ real matrix $M$ one has
\[
\frac{1}{N} \sum_{m=1}^N U_m M U_m^\dagger = \big( \frac{1}{2}\tr M \big) \mathrm{Id} + \frac{1}{2} (M - M^\dagger).
\]
\end{lemma}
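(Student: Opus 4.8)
The plan is to exploit the representation-theoretic structure of the conjugation action $M \mapsto U_m M U_m^\dagger$ on the four-dimensional space of real $2\times 2$ matrices. Each $U_m$ is a real rotation, hence orthogonal, so $U_m^\dagger = U_m^T = U_m^{-1}$; conjugation by $U_m$ therefore preserves the trace and carries symmetric matrices to symmetric matrices and antisymmetric to antisymmetric. Accordingly I would split
\[
M = \tfrac{1}{2}(\tr M)\,\mathrm{Id} + \tfrac{1}{2}(M - M^\dagger) + S, \qquad S := \tfrac{1}{2}(M+M^\dagger) - \tfrac{1}{2}(\tr M)\,\mathrm{Id},
\]
into its scalar, antisymmetric, and traceless-symmetric parts, and average each summand separately.

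The first two summands are exactly the claimed right-hand side, and I would check they are individually fixed by every conjugation. The scalar matrix $\tfrac{1}{2}(\tr M)\,\mathrm{Id}$ commutes with all $U_m$, while every antisymmetric $2\times2$ matrix is a multiple of $J = \bigl(\begin{smallmatrix} 0 & -1 \\ 1 & 0 \end{smallmatrix}\bigr)$, and $J$ commutes with rotations because $U_\theta = (\cos\theta)\,\mathrm{Id} + (\sin\theta) J$. Hence $U_m\,\tfrac12(M-M^\dagger)\,U_m^{-1} = \tfrac12(M-M^\dagger)$ for each $m$, and these two terms each equal their own average.

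It remains to show the traceless-symmetric part $S$ averages to zero, and this is the one place where the hypothesis $N \geq 3$ is used. The transparent way is to identify $\R^2$ with $\C$ and write the real-linear map $M$ as $z \mapsto \alpha z + \gamma \bar z$ for suitable $\alpha,\gamma \in \C$; then $U_m$ is multiplication by $e^{2\pi i m/N}$, and a one-line computation gives $U_m M U_m^{-1}\colon z \mapsto \alpha z + e^{4\pi i m/N}\,\gamma\,\bar z$. Thus conjugation fixes the commuting part $z \mapsto \alpha z$ — which one verifies equals exactly $\tfrac12(\tr M)\,\mathrm{Id} + \tfrac12(M-M^\dagger)$ — and rotates the antilinear part (that is, $S$) through the \emph{doubled} angle. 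Averaging the $S$-contribution then reduces to the geometric sum $\frac{1}{N}\sum_{m=1}^N e^{4\pi i m/N}$, which vanishes precisely because $e^{4\pi i/N} \neq 1$ when $N \geq 3$, whereas for $N=1,2$ the sum equals $1$ and the identity genuinely fails. The main thing to get right is this doubling of the angle under conjugation and the ensuing root-of-unity cancellation; everything else is bookkeeping. One could alternatively avoid complex numbers and compute $U_m S U_m^{-1}$ directly in the basis $\bigl(\begin{smallmatrix} 1 & 0 \\ 0 & -1 \end{smallmatrix}\bigr),\ \bigl(\begin{smallmatrix} 0 & 1 \\ 1 & 0 \end{smallmatrix}\bigr)$, in which conjugation by $U_m$ acts as a planar rotation through $4\pi m/N$, and then sum the resulting cosines and sines.
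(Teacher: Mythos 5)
Your proposal is correct, and its key mechanism differs from the paper's. The paper splits $M$ into symmetric and antisymmetric parts only, and handles the symmetric part with a soft, Schur-type argument: the average $V=\frac1N\sum_m U_m M_s U_m^\dagger$ is symmetric and commutes with $U_1$, and a symmetric matrix commuting with a rotation through angle $2\pi/N<\pi$ must be a scalar multiple of the identity (an eigenvector $x$ yields the linearly independent eigenvector $U_1x$ with the same eigenvalue); a trace computation then identifies the scalar as $\frac12\tr M$. You instead refine the decomposition to scalar $+$ antisymmetric $+$ traceless-symmetric and compute the conjugation action explicitly: writing $M$ as $z\mapsto\alpha z+\gamma\bar z$, conjugation by $U_m$ fixes $\alpha$ and multiplies $\gamma$ by $e^{4\pi im/N}$, so the traceless-symmetric part averages to $\bigl(\frac1N\sum_{m=1}^N e^{4\pi im/N}\bigr)S$, which vanishes exactly when $e^{4\pi i/N}\neq1$, i.e.\ $N\geq3$. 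Your identification of the linear part with the claimed right-hand side checks out: for $M=\bigl(\begin{smallmatrix}a&b\\c&d\end{smallmatrix}\bigr)$ one has $\alpha=\frac12(a+d)+\frac{i}{2}(c-b)$, and multiplication by $\alpha$ is the matrix $\frac12(\tr M)\,\mathrm{Id}+\frac{c-b}{2}\bigl(\begin{smallmatrix}0&-1\\1&0\end{smallmatrix}\bigr)=\frac12(\tr M)\,\mathrm{Id}+\frac12(M-M^\dagger)$. What each approach buys: the paper's argument is coordinate-free and needs nothing beyond linear independence of $x$ and $U_1x$; yours exhibits the underlying mechanism --- conjugation rotates the two-dimensional space of traceless symmetric matrices through the \emph{doubled} angle $4\pi m/N$ --- which makes the threshold $N\geq3$ transparent and shows as a byproduct that the identity genuinely fails for $N=1,2$, where the root-of-unity sum equals $1$ rather than $0$, a sharpness statement the paper's proof does not address.
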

\begin{proof}
Begin by decomposing $M$ into its symmetric and antisymmetric parts as $M=M_s+M_a$, where $M_s=(M+M^\dagger)/2$ and $M_a=(M-M^\dagger)/2$. To handle the symmetric part we argue as for Schur's Lemma in representation theory: let 
\[
V=\frac{1}{N} \sum_{m=1}^N U_m M_s U_m^\dagger
\]
and observe that $V$ is symmetric. Notice $U_1 V U_1^\dagger=V$ for each $m$ (because $U_1 U_m = U_{m+1}$ and $U_{N+1}=U_1$). Thus $V$ commutes with the rotation $U_1$. Let $\alpha \in \R$ be an eigenvalue of the symmetric matrix $V$, with eigenvector $x$. Then
\[
  V(U_1 x)=U_1(Vx)=\alpha(U_1 x).
\]
Hence $U_1 x$ is also an eigenvector belonging to $\alpha$. Since $x$ and $U_1 x$ are linearly independent (noting that $U_1$ rotates by angle $\frac{2\pi}{N}<\pi$, because $N \geq 3$), by taking linear combinations we deduce that every vector in $\R^2$ is an eigenvector of $V$ with eigenvalue $\alpha$. Hence $V=\alpha \, \mathrm{Id}$.
Taking the trace yields $\alpha =  \frac{1}{2} \tr V$. Also 
\[
\tr V= \frac{1}{N} \sum_{m=1}^N \tr M_s = \tr M_s = \tr M .
\]
Therefore $V= \frac{1}{2} \big( \tr M \big) \mathrm{Id}$, which gives the first part of the formula in the Lemma.

Since $M$ is real, its antisymmetric part can be written $M_a = 
b \left( \begin{smallmatrix}
0 & -1 \\ 1 & 0
\end{smallmatrix} \right)$ for some $b \in \R$. Thus $M_a$ consists of rotation by $\pi/2$ followed by rescaling by $b$. Hence $M_a$ commutes with the rotation $U_m$, and so
\[
\frac{1}{N} \sum_{m=1}^N U_m M_a U_m^\dagger = M_a = \frac{1}{2} (M - M^\dagger) ,
\]
which proves the second term of the formula in the Lemma.
\end{proof}

Next we collect some immediate consequences of Lemma~\ref{tightframe}.
\begin{lemma} \label{consequences}
Let $N \geq 3$, and suppose $M$ and $T$ are $2 \times 2$ real matrices, with $T$ invertible. Then
\begin{align*}
\frac{1}{N} \sum_{m=1}^N U_m T^{-1} T^{-\dagger} U_m^\dagger & = \frac{1}{2} \lVert T^{-1} \rVert_{HS}^2 \, \mathrm{Id} \, , \\
\frac{1}{N} \sum_{m=1}^N U_m T^\dagger M^\dagger M T U_m^\dagger & = \frac{1}{2} \lVert MT \rVert_{HS}^2 \, \mathrm{Id} \, . \notag
\end{align*}
If $\tr M = 0$ then
\[
\frac{1}{N} \sum_{m=1}^N U_m T^{-1} M T U_m^\dagger = \frac{1}{2}  \big( T^{-1} M T - (T^{-1}MT)^\dagger \big).
\]
\end{lemma}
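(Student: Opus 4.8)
The plan is to deduce all three identities from Lemma~\ref{tightframe} by choosing the matrix $M$ in that lemma appropriately, exploiting the fact that conjugation $X \mapsto U_m X U_m^\dagger$ is linear and that the rotations $U_m$ are real orthogonal (so $U_m^\dagger = U_m^{-1}$ and $\det U_m = 1$). The key observation is that each left-hand side here is exactly the averaging operation $\frac{1}{N}\sum_m U_m (\,\cdot\,) U_m^\dagger$ applied to a cleverly chosen inner matrix, so I just need to compute the trace and the antisymmetric part of that inner matrix and substitute into the formula $(\frac{1}{2}\tr M)\mathrm{Id} + \frac{1}{2}(M - M^\dagger)$.

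For the first identity I would set $M = T^{-1}T^{-\dagger}$. This matrix is symmetric (indeed $(T^{-1}T^{-\dagger})^\dagger = T^{-1}T^{-\dagger}$ since the transpose of a product reverses order), so its antisymmetric part vanishes and only the trace term survives. The trace is $\tr(T^{-1}T^{-\dagger}) = \lVert T^{-1} \rVert_{HS}^2$ by the defining identity $\lVert M \rVert_{HS}^2 = \tr(MM^\dagger)$ recorded in the matrix-notation subsection, applied with $M = T^{-1}$. This yields the stated $\frac{1}{2}\lVert T^{-1} \rVert_{HS}^2\,\mathrm{Id}$. The second identity is entirely parallel: choose $M = T^\dagger M^\dagger M T = (MT)^\dagger(MT)$, which is again symmetric, so the antisymmetric part drops out, and $\tr\big((MT)^\dagger(MT)\big) = \lVert MT \rVert_{HS}^2$ gives the result. (I would note the mild notational clash between the lemma's generic $M$ and the displayed $M$; in the writeup I would simply apply Lemma~\ref{tightframe} with its matrix taken to be the symmetric product in question.)

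For the third identity I would set $M = T^{-1}M T$, where now $M$ is the given trace-zero matrix. The trace term is controlled by the cyclic invariance of trace: $\tr(T^{-1}MT) = \tr(M T T^{-1}) = \tr M = 0$, so by hypothesis the identity-matrix contribution vanishes. What remains is precisely the antisymmetric-part term $\frac{1}{2}\big(T^{-1}MT - (T^{-1}MT)^\dagger\big)$ from Lemma~\ref{tightframe}, which is the asserted formula. This is the one place where the hypothesis $\tr M = 0$ is used, and it is used exactly to kill the symmetric contribution.

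I do not anticipate a serious obstacle here, since each claim is a direct specialization of the previous lemma; the only things requiring care are the purely formal manipulations of adjoints and traces (symmetry of $T^{-1}T^{-\dagger}$ and of $(MT)^\dagger(MT)$, and cyclicity of trace), together with keeping the two uses of the letter $M$ notationally distinct. The mild subtlety worth flagging is that Lemma~\ref{tightframe} is stated for real matrices, so I should confirm that the inner matrices $T^{-1}T^{-\dagger}$, $(MT)^\dagger(MT)$, and $T^{-1}MT$ are all real (they are, being products of real matrices), which is what licenses the application.
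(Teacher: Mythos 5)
Your proposal is correct and matches the paper's intent exactly: the paper states this lemma as an "immediate consequence" of Lemma~\ref{tightframe} with no written proof, and the intended argument is precisely your specialization — symmetric inner matrices $T^{-1}T^{-\dagger}$ and $(MT)^\dagger(MT)$ whose traces are the Hilbert--Schmidt norms squared, and the trace-zero (by cyclicity) inner matrix $T^{-1}MT$ for which only the antisymmetric part survives. Your care about the real-matrix hypothesis and the notational clash with the letter $M$ is appropriate but introduces nothing beyond what the paper takes for granted.
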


Also, by applying Lemma~\ref{tightframe} with $M=T^\dagger T$ and then putting $y^\dagger$ on the left and $y$ on the right, we find: 
\begin{lemma} \label{variant}
Let $N \geq 3$ and suppose $T$ is a $2 \times 2$ real matrix and $y \in \R^2$ is a unit column vector. Then 
\[
\frac{1}{N} \sum_{m=1}^N |T U_m^{-1} y|^2  = \frac{1}{2} \lVert T \rVert_{HS}^2 .
\]
\end{lemma}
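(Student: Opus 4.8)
The plan is to reduce the claimed scalar identity to the matrix identity already established in Lemma~\ref{tightframe}. First I would apply that lemma with the specific choice $M = T^\dagger T$. Since $T$ is real, the matrix $M$ is symmetric, so its antisymmetric part $\frac{1}{2}(M - M^\dagger)$ vanishes and the right-hand side of Lemma~\ref{tightframe} collapses to a scalar multiple of the identity:
\[
\frac{1}{N}\sum_{m=1}^N U_m T^\dagger T U_m^\dagger = \Big(\tfrac{1}{2}\tr(T^\dagger T)\Big)\mathrm{Id}.
\]
Recognizing that $\tr(T^\dagger T) = \lVert T\rVert_{HS}^2$ then turns the coefficient into $\frac{1}{2}\lVert T\rVert_{HS}^2$.

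Next I would sandwich this matrix identity between $y^\dagger$ on the left and $y$ on the right. The right-hand side becomes $\frac{1}{2}\lVert T\rVert_{HS}^2\, y^\dagger y = \frac{1}{2}\lVert T\rVert_{HS}^2$, using that $y$ is a unit vector. On the left, each summand factors as a squared length,
\[
y^\dagger U_m T^\dagger T U_m^\dagger y = (T U_m^\dagger y)^\dagger (T U_m^\dagger y) = |T U_m^\dagger y|^2 ,
\]
so the averaged left-hand side is exactly $\frac{1}{N}\sum_m |T U_m^\dagger y|^2$.

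Finally, since $U_m$ is a real rotation matrix it is orthogonal, which gives $U_m^\dagger = U_m^{-1}$; substituting this identity converts the expression into the stated form $\frac{1}{N}\sum_m |T U_m^{-1} y|^2$, completing the derivation. The only point requiring any care --- and it is scarcely an obstacle --- is observing at the outset that $T^\dagger T$ is symmetric, so that the antisymmetric contribution in Lemma~\ref{tightframe} drops out cleanly; everything else is routine bookkeeping with the Hilbert--Schmidt norm and the orthogonality of the $U_m$.
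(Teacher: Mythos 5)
Your proof is correct and follows exactly the route the paper takes: applying Lemma~\ref{tightframe} with $M = T^\dagger T$ (whose antisymmetric part vanishes), identifying $\tr(T^\dagger T) = \lVert T \rVert_{HS}^2$, and then conjugating the resulting matrix identity by the unit vector $y$, using $U_m^\dagger = U_m^{-1}$. Nothing is missing; this matches the paper's own (one-line) derivation.
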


\subsection*{Connection to equiangular tight frames} Fix a vector $y \in \R^2$ of length $1$. Then  $M=y y^\dagger$ is a symmetric $2 \times 2$ matrix. Lemma~\ref{tightframe} implies that 
\[
\frac{1}{N} \sum_{m=1}^N U_m y y^\dagger U_m^\dagger = \frac{1}{2} \mathrm{Id} .
\]
Conjugating with $x \in \R^2$ yields
\[
\frac{1}{N} \sum_{m=1}^N x^\dagger U_m y y^\dagger U_m^\dagger x = \frac{1}{2} |x|^2 ,
\]
or in other words
\[
\sum_{m=1}^N |x \cdot U_m y|^2  = \frac{N}{2} |x|^2 , \qquad x \in \R^2 .
\]
This Plancherel type identity says that the system of vectors $\{ U_m y \}_{m=1}^N$ (consisting of the $N$th roots of unity, rotated to start at $y$) forms a \emph{tight frame} with constant $N/2$. 

More information on frames and their applications in Hilbert spaces may be found in the monograph by Christensen \cite{Ch03} or the text by Han \emph{et al.}\ \cite{H07}.

\section{\bf Proof of Theorem~\ref{mag}: the inequality}
\label{mag_proof}

We prove the Dirichlet inequality in the theorem. The Neumann proof is virtually identical.

Without loss of generality, we may assume $T$ is diagonal with 
\[
T = \begin{pmatrix} t_1 & 0 \\ 0 & t_2 \end{pmatrix}
\]
where $t_1, t_2 > 0$, by a singular value decomposition of $T$ and using invariance of the spectrum under rotations and reflections (Appendix~\ref{invariance}).

Choose the vector potential to be 
\[
F(x) = \frac{\beta}{2} (-x_2,x_1) = \beta (Mx)^\dagger \qquad  \text{where} \qquad 
M = 
\frac{1}{2} \begin{pmatrix}
0 & -1 \\ 1 & 0
\end{pmatrix} ,
\]
so that the magnetic field is $\nabla \times F = (0,0,\beta)$ as required. 
(\emph{Aside.} This vector potential points tangentially to circles centered at the origin, and so it interacts well with rotations of the domain.) 

For brevity in what follows, we write
\[
b =  \frac{\sqrt{2}}{\lVert T^{-1} \rVert_{HS}} = \sqrt{\frac{2}{t_1^{-2}+t_2^{-2}}}, \qquad c = \frac{\sqrt{2}}{\lVert T^{-1} \rVert_{HS}} \frac{1}{|\det T|} = \sqrt{\frac{2}{t_1^2+t_2^2}}.
\]
Then $bc=2 \big/ \big( t_1 t_2^{-1} + t_1^{-1} t_2 \big)$.

We will need the Rayleigh--Poincar\'{e} variational principle \cite[p.\,98]{B80}, which characterizes the sum of the first $n \geq 1$ eigenvalues as
\begin{align*}
& \sum\limits_{j=1}^n \lambda_j \\
& = \min \big\{ \sum\limits_{j=1}^n R[v_j] : v_1, \dots,v_n \in H^1_0(\Omega;\C)\text{\ are pairwise $L^2$-orthogonal} \big\} .
\end{align*}
Specifically, we will construct trial functions $v_j$ on the domain $T(D)$ by linearly transplanting eigenfunctions $u_j$ of $D$, and then we will average with respect to the rotations of $D$ by means of the tight frame lemmas in Section~\ref{tightframe_sec}. 

Let $u_1,u_2,u_3,\ldots$ be orthonormal eigenfunctions on $D$ for the vector potential $F$, corresponding to the eigenvalues $\lambda_j(D,\hbar,\beta), j=1,2,3,\ldots$. Consider an orthogonal $2 \times 2$ matrix $U$ that fixes $D$, so that $U(D)=D$. Define trial functions
\[
v_j = u_j \circ U \circ T^{-1}
\]
on the domain $E=T(D)$, noting $v_j \in H^1_0(E;\C)$ because $u_j \in H^1_0(D;\C)$.
The functions $v_j$ are pairwise orthogonal, since
\[
\int_E v_j \overline{v_k} \, dx = \int_D u_j \overline{u_k} \, dx \cdot |\det TU^{-1}| = 0
\]
when $j \neq k$. Thus by the Rayleigh--Poincar\'{e} principle, we have
\begin{equation}
\label{rayleighprinc}
\sum_{j=1}^n \lambda_j(E,b\hbar,c\beta) \leq \sum_{j=1}^n \frac{\int_E |(i b \hbar\nabla+c F) v_j|^2 \, dx}{\int_E |v_j|^2 \, dx} .
\end{equation}
For each function $v=v_j$, we change variable in the right side of \eqref{rayleighprinc} with $x \mapsto TU^{-1}x$ and use that the $u_j$ are normalized in $L^2$, thus finding 
\begin{align*}
&  \frac{\int_E |(i b\hbar\nabla+ c F) v|^2 \, dx}{\int_E |v|^2 \, dx} \\
& = \frac{\int_D \big| i b \hbar\nabla u(x)UT^{-1}+ c \beta(MTU^{-1}x)^\dagger u(x) \big|^2 \, dx \cdot |\det TU^{-1}|}{\int_D |u|^2 \, dx \cdot |\det TU^{-1}|} \\
& = \int_D \big| i b \hbar\nabla u(x)UT^{-1}+c \beta(MTU^{-1}x)^\dagger u(x) \big|^2 \, dx \\
& = \int_D \big( Q_1 + Q_2 + Q_3 \big) \, dx
\end{align*}
by expanding the square, where 
\begin{align*}
Q_1 & = b^2 \hbar^2 \nabla u \big[ UT^{-1} T^{-\dagger} U^\dagger \big] (\nabla u)^\dagger , \\
Q_2 & = 2 b c \hbar \beta \Real \big\{ i \overline{u} \nabla u \big[ UT^{-1}MTU^\dagger \big] x \big\} , \\
Q_3 & = c^2 \beta^2 |u|^2 x^\dagger \big[ U T^\dagger M^\dagger M T U^\dagger \big] x .
\end{align*}

Let $N \geq 3$ be the order of rotational symmetry of $D$, and choose $U$ to be the matrix $U_m$ representing rotation by angle $2\pi m/N$. Averaging over $m=1,\dots,N$ implies by Lemma~\ref{consequences} that
\begin{align*}
\frac{1}{N} \sum_{m=1}^N Q_1 & = b^2 \hbar^2 \frac{1}{2} \lVert T^{-1} \rVert_{HS}^2 \, |\nabla u|^2 \\
& = \hbar^2 |\nabla u|^2 , \\
\frac{1}{N} \sum_{m=1}^N Q_2 & = b c \hbar \beta \Real \big\{ i \overline{u} \nabla u \big[ T^{-1}MT - (T^{-1}MT)^\dagger \big] x \big\} \\
& = 2\hbar \beta \Real \big\{ i \overline{u} (\nabla u) Mx \big\} , \\
\frac{1}{N} \sum_{m=1}^N Q_3 & = \frac{1}{2} c^2 \beta^2 \lVert MT \rVert_{HS}^2 \, |u|^2 |x|^2 \\
& = \frac{1}{4} \beta^2 |u|^2 |x|^2 ,
\end{align*}
where we used for $Q_2$ that $\tr M = 0$ and where we simplified the three expressions by substituting the definitions of the matrices $M$ and $T$ and the constants $b$ and $c$.

Hence by averaging \eqref{rayleighprinc} over $m=1,\dots,N$ we deduce that 
\begin{align*}
& \sum_{j=1}^n \lambda_j(E,b \hbar,c \beta) \\
& \leq \sum_{j=1}^n \int_D \Big[ \hbar^2 |\nabla u_j|^2 + 2 \hbar \beta \Real \big\{ i \overline{u_j} (\nabla u_j) M x  \big\} + \frac{1}{4} \beta^2 |u_j|^2 |x|^2 \Big] \, dx \\
& = \sum_{j=1}^n \int_D \big| \big( i \hbar \nabla + \beta (Mx)^\dagger \big) u_j  \big|^2 \, dx  \\& = \sum_{j=1}^n \lambda_j(D,\hbar,\beta) 
\end {align*}
which completes the proof of the inequality in the theorem.

\section{\bf Proof of Theorem~\ref{mag}: the case of equality}
\label{equality_proof}

We continue to treat the Dirichlet eigenvalues. The Neumann case proceeds exactly the same way. 

\subsubsection*{Sufficient conditions for equality}

When $T$ is an orthogonal matrix, equality holds in the theorem because the eigenvalues are invariant under orthogonal transformations (see Appendix~\ref{invariance}) and $T^{-1}=T^\dagger$ has Hilbert--Schmidt norm $(\tr T^\dagger T)^{1/2}=(\tr \mathrm{Id})^{1/2}=\sqrt{2}$. Scalar multiples preserve equality too, because 
\[
\lambda_j \big( rD , r\hbar, \beta/r \big) = \lambda_j(D,\hbar,\beta)
\]
by Lemma~\ref{dilation}. 

\subsubsection*{Necessary conditions for equality, when $n=1$} Now assume $\beta \neq 0$ and that equality holds in the theorem for the first eigenvalue, so that
\begin{equation} \label{fundeq}
 \lambda_1 \Big({T(D)}, \frac{\sqrt{2}}{\lVert T^{-1} \rVert_{HS}} \hbar, \frac{\sqrt{2}}{\lVert T^{-1} \rVert_{HS}} \frac{\beta}{|\det T|} \Big) = \lambda_1 \big(D,\hbar,\beta) .
\end{equation}
Our task is to show that $T$ is a scalar multiple of an orthogonal matrix. 

We may assume $T=\left( \begin{smallmatrix} t_1 & 0 \\ 0 & t_2 \end{smallmatrix} \right)$ is diagonal with $t_1 , t_2 > 0$, by applying the singular value decomposition exactly as in the previous section. If $t_1 = t_2$ then the diagonal matrix $T$ is a scalar multiple of the identity, and so the original $T$ was a scalar multiple of an orthogonal matrix, as desired. 

Thus we may suppose from now on that $t_1 \neq t_2$. We will deduce a contradiction, thus ruling out this possibility and completing the proof of the equality case in the theorem. 

Write $u=u_1$ for a first eigenfunction on $D$, with respect to the vector potential $F(x)=\frac{\beta}{2}(-x_2,x_1)$. Then $u$ satisfies the eigenequation $(i\hbar \nabla+F)^2 u=\lambda_1(D,\hbar,\beta) u$, which expands to say
\begin{equation} \label{linear1}
-\hbar^2 (u_{x_1 x_1} + u_{x_2 x_2}) + i \hbar \beta (-x_2,x_1) \cdot \nabla u + \frac{\beta^2}{4} (x_1^2+x_2^2) u = \lambda_1(D,\hbar,\beta) u .
\end{equation}
Inspecting the proof of the theorem in the previous section, we see that one of the trial functions on $T(D)$ is $v=u \circ T^{-1}$, in other words $v(x_1,x_2)=u(x_1/t_1,x_2/t_2)$. (This trial function arises when $m=N$, since $U_N=\mathrm{Id}$.) Since equality must hold in the Rayleigh principle \eqref{rayleighprinc} with $n=1$, we deduce that this trial function must be a first eigenfunction on $T(D)$ using Planck constant $b \hbar$ and field parameter $c\beta$. That is, $v$ satisfies the eigenequation
\[
-b^2 \hbar^2 (v_{x_1 x_1} + v_{x_2 x_2}) + i b c \hbar \beta (-x_2,x_1) \cdot \nabla v + \frac{c^2 \beta^2}{4} (x_1^2+x_2^2) v =  \lambda_1 \big( T(D),b\hbar,c\beta \big) v .
\]
In terms of $u$, and using equality of the eigenvalues in \eqref{fundeq}, we can rewrite this last equation as 
\begin{align} 
- & \frac{2\hbar^2 }{t_1^{-2}+t_2^{-2}} \Big( \frac{u_{x_1 x_1}}{t_1^2} + \frac{u_{x_2 x_2}}{t_2^2} \Big) +\frac{2 i \hbar \beta }{t_1 t_2^{-1} + t_1^{-1} t_2} (-t_2 x_2, t_1 x_1) \cdot \Big( \frac{u_{x_1}}{t_1} , \frac{u_{x_2}}{t_2} \Big) \notag \\ 
+ & \frac{\beta^2}{2(t_1^2+t_2^2)} ( t_1^2 x_1^2 + t_2^2 x_2^2 ) u = \lambda_1(D,\hbar,\beta) u . \label{linear2}
\end{align}

By solving the simultaneous linear equations \eqref{linear1} and \eqref{linear2} (which can be done since $t_1 \neq t_2$) we obtain expressions for the second partial derivatives in terms of lower order derivatives:
\begin{align*} 
-u_{x_1 x_1} & = +\frac{i\beta}{\hbar} x_2 u_{x_1} - \frac{\beta^2}{4\hbar^2} x_2^2 u + \frac{\lambda_1(D,\hbar,\beta)}{2\hbar^2} u , \\
-u_{x_2 x_2} & = - \frac{i\beta}{\hbar} x_1 u_{x_2} - \frac{\beta^2}{4\hbar^2} x_1^2 u + \frac{\lambda_1(D,\hbar,\beta)}{2\hbar^2} u .
\end{align*}
These formulas are equivalent to
\begin{align} 
-(e^{i\beta x_1 x_2/2\hbar} \, u)_{x_1 x_1} & = \omega^2 (e^{i\beta x_1 x_2/2\hbar} \, u) , \label{eigen1new} \\
-(e^{i\beta x_1 x_2/2\hbar} \, \overline{u})_{x_2 x_2} & = \omega^2 (e^{i\beta x_1 x_2/2\hbar} \, \overline{u}) , \label{eigen2new}
\end{align}
where $\omega=\sqrt{\lambda_1(D,\hbar,\beta)/2\hbar^2}$. Note $\omega>0$ because the eigenvalue $\lambda_1$ is positive (Lemma~\ref{poseigen}). 
 

Given any point $y \in D$, take a rectangular neighborhood $R \subset D$ centered at $y$ with the sides of the rectangle being parallel to the axes. On that neighborhood we can solve the ODEs \eqref{eigen1new} and \eqref{eigen2new}, finding that
\begin{align*} 
e^{i\beta x_1 x_2/2\hbar} \, u & = a(x_2)e^{i\omega x_1} + b(x_2)e^{-i\omega x_1}, \\
e^{-i\beta x_1 x_2/2\hbar} \, u & = c(x_1)e^{i\omega x_2} + d(x_1)e^{-i\omega x_2} , 
\end{align*}
for some functions $a,b,c,d$. Hence
\begin{equation} \label{frequencies}
a(x_2)e^{i\omega x_1} + b(x_2)e^{-i\omega x_1} = e^{i\beta x_1 x_2/\hbar} \big( c(x_1)e^{i\omega x_2} + d(x_1)e^{-i\omega x_2} \big) .
\end{equation}
Let $0<z_1<\pi/\omega$ with $4z_1$ being smaller than the width of $R$. Consider equation \eqref{frequencies} with two different $x_1$-values, namely $x_1=y_1+z_1$ and $x_1=y_1+2z_1$, and with any $x_2$-value that cuts through the rectangle $R$. The resulting two equations provide simultaneous linear equations for the unknowns $a(x_2)$ and $b(x_2)$. These simultaneous equations can be solved (since $0<2\omega z_1<2\pi$). The solution for $a(x_2)$ has the form
\begin{align}
a(x_2) & = e^{i\beta y_1 x_2/\hbar} \times \big( \text{linear combination of terms $e^{i\gamma x_2}$ with} \notag \\
& \hspace*{.3in} \text{frequencies $\gamma=\pm \omega + \nu \beta z_1/\hbar$, where $\nu=1,2$} \big) . \label{freq-eq}
\end{align}
The same conclusion holds for any other sufficiently small value of $z_1$, which implies that $a(x_2)$ is identically zero, as we now explain. 

Since $\beta \neq 0$ we may choose $\delta=\omega \hbar / |\beta| k$ where the integer $k \geq 9$ is taken so large that $z_1=4\delta$ is smaller than $\pi/\omega$ and $4z_1$ is less than the width of the rectangle $R$. Let $\e=\beta/|\beta|$ so that $\e=\pm 1$. Then for the frequency $\gamma=\pm \omega + \nu \beta z_1/\hbar$ we have $e^{i\gamma x_2}=e^{i(\pm k + 4\e)(\omega x_2/k)}$ when $\nu=1$, and  $e^{i\gamma x_2}=e^{i(\pm k + 8\e)(\omega x_2/k)}$ when $\nu=2$; these four exponentials determine the linear combination for $a(x_2)$ in \eqref{freq-eq}. On the other hand, we could choose $z_1=\delta$, in which case the linear combination for $a(x_2)$ would involve the four exponentials $e^{i(\pm k + \e)(\omega x_2/k)}$ and $e^{i(\pm k + 2\e)(\omega x_2/k)}$. These two different linear combinations for $a(x_2)$ involve eight distinct frequencies: the eight integers $\pm k + \e,\pm k + 2\e,\pm k + 4\e,\pm k + 8\e$ are distinct because $k \geq 9$. The linear combinations for $a(x_2)$ are equal for a whole interval of $x_2$-values, and so the coefficient of each of the eight exponentials must be $0$, because a nontrivial trigonometric polynomial can have at most finitely many zeros. Hence $a(x_2) \equiv 0$.

Similarly $b(x_2)$ must vanish identically and so $u$ is identically zero on $R$, and hence on all of $D$, which is impossible because $u$ is an eigenfunction. This contradiction concludes the proof.

\section{\bf Proof of Corollary~\ref{2dim}}
\label{2dim_proof}

Assume $\Omega=T(D)$ for some linear transformation $T$. Pulling out the coefficient of the Planck constant in Theorem~\ref{mag} implies that
\[
\frac{2}{\lVert T^{-1} \rVert_{HS}^2} \sum\limits_{j=1}^{n} \lambda_j\Big({T(D)}, \hbar, \frac{\beta}{|\det T|} \Big) \leq \sum\limits_{j=1}^{n} \lambda_j\big(D,\hbar,\beta) .
\]
The Hilbert--Schmidt norm of $T^{-1}$ can be interpreted in terms of moment of inertia of the image domain $T(D)$, with 
\[
\frac{2}{\lVert T^{-1} \rVert_{HS}^2} = \frac{A^3}{I}\big(T(D)\big) \Big/ \frac{A^3}{I}(D) \notag \\
\]
by \cite[Lemma~5.3]{LS11b}. And obviously
\[
\frac{1}{|\det T|} = \frac{A(D)}{A\big( T(D) \big)} .
\]
Substituting these expressions now proves the desired inequality in the Corollary, after replacing $\beta$ with $\beta/A(D)$ on both sides. 

The ``maximality'' statement follows from the ``equality'' statement in Theorem~\ref{mag}.

\section{\bf Proof of Theorem~\ref{magrobin}}
\label{magrobin_proof}

The proof follows the Dirichlet and Neumann cases (see the proof of Theorem~\ref{mag}), except that we must handle also the boundary integral appearing in the Rayleigh quotient \eqref{robinrayleigh}. This we now do, by following Laugesen and Siudeja's proof from the case of vanishing magnetic field \cite[Theorem~3.3]{LS11b}. 

The boundary contribution to the Rayleigh quotient of the trial function $v$ on the domain $E=T(D)$ is
\begin{align*}
\frac{\int_{\partial E} |v|^2 \, ds(x)}{\int_E |v|^2 \, dx}
& = \frac{\int_{\partial E} |u(UT^{-1}x)|^2 \, ds(x)}{\int_E |u(UT^{-1}x)|^2 \, dx} \\
& = \frac{\int_{\partial D} |u(Ux)|^2 |T\tau(x)| \, ds(x)}{\int_D |u(Ux)|^2 \, dx \cdot |\det T|}
\end{align*}
by $x \mapsto Tx$, where $\tau(x)$ denotes the unit tangent vector to $\partial D$ at $x$. Geometrically, $|T\tau(x)|$ is the factor by which $T$ stretches the tangent direction to $\partial D$ at $x$.

The rotational symmetry of $D$ ensures that tangent vectors rotate according to $\tau(U^{-1}x) = U^{-1} \tau(x)$. Thus after replacing $x$ with $U^{-1}x$ in the last formula we obtain 
\[
\frac{\int_{\partial E} |v|^2 \, ds(x)}{\int_E |v|^2 \, dx}
=  |\det T|^{-1} \int_{\partial D} |u(x)|^2 |TU^{-1}\tau(x)| \, ds(x) .
\]
Choosing $U=U_m$ and averaging the preceding quantity over $m$, and then applying Cauchy--Schwarz, gives the upper estimate
\begin{align*}
& |\det T|^{-1} \int_{\partial D} |u(x)|^2 \Big\{ \frac{1}{N} \sum_{m=1}^N  |T U_m^{-1} \tau(x)|^2 \Big\}^{\! \! 1/2} \, ds(x) \\
& = |\det T|^{-1} \, \frac{1}{\sqrt{2}} \lVert T \rVert_{HS} \int_{\partial D} |u(x)|^2 \, ds(x) \notag
\end{align*}
by Lemma~\ref{variant}. After multiplying by $\sigma \sqrt{2}/ \lVert T^{-1} \rVert_{HS}$ and sustituting the relation 
\[
\lVert T^{-1} \rVert_{HS} = \frac{1}{|\det T|} \lVert T \rVert_{HS}
\]
(which is valid for $2 \times 2$ matrices, by the formula for $T^{-1}$ in terms of $T$) we obtain the quantity
\[
\sigma \int_{\partial D} |u(x)|^2 \, ds(x) .
\]
Now we can easily modify the proof of Theorem~\ref{mag} to handle the Rayleigh quotient \eqref{robinrayleigh} for the Robin--de Gennes eigenvalues. 

The equality statement follows exactly as for Theorem~\ref{mag}.

\section*{Acknowledgments}
The authors acknowledge support from the National Science Foundation grant
DMS 08-38434 ``EMSW21-MCTP: Research Experience for Graduate Students''.

We thank Bart{\l}omiej Siudeja for creating Figure~\ref{fig:lineartrans}, and Rupert Frank for alleviating our ignorance on certain issues.

\appendix

\section{\bf Invariance and positivity of the spectrum}
\label{invariance}

Our proofs rely on rotations, reflections and dilations of the domain. These reductions are justified by the invariance lemmas in this appendix, which we prove for the sake of readers unfamiliar with the magnetic Laplacian.

Throughout this appendix, $\Omega$ is a bounded plane domain, further assumed to have Lipschitz boundary in the Neumann and Robin--de Gennes situations. We give proofs for the Dirichlet eigenvalues only. The proofs are identical for Neumann conditions, except using $H^1$ instead of $H^1_0$. For Robin--de Gennes conditions, we indicate (when needed) how to treat the boundary integral in the Rayleigh quotient. 

In the first lemma, we go against the notation used in the rest of the paper and write the eigenvalues as depending on the vector potential rather than on its curl, the magnetic field. Let $\widetilde{F},F \in C^\infty(\R^2;\R^2)$ be vector potentials defined on the whole plane that generate magnetic fields $\widetilde{B}=\nabla \times \widetilde{F},B=\nabla \times F$, respectively. These fields are not necessarily constant. 

\begin{lemma}[Gauge invariance] \label{gaugelemma}
If two vector potentials differ by a gradient vector, then they generate the same magnetic eigenvalues. That is, if $\widetilde{F}=F+\nabla f$ for some function $f \in C^\infty(\R^2)$ then
\[
\lambda_j(\Omega,\hbar,\widetilde{F}) = \lambda_j(\Omega,\hbar,F) , \qquad j =1,2,3,\dots
\]
and similarly for the Robin--de Gennes and Neumann eigenvalues.
\end{lemma}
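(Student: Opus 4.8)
The plan is to exploit the \emph{gauge transformation} of eigenfunctions: if $u$ is an eigenfunction for the potential $F$, then $e^{if/\hbar}u$ should be an eigenfunction for $\widetilde{F}=F+\nabla f$ with the same eigenvalue. Rather than verify this at the level of the eigenvalue equations (which would require differentiating twice and is error-prone), I would work entirely at the level of the Rayleigh quotient and the variational principle, since this keeps the computation short and handles all three boundary conditions uniformly.

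\smallskip

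\noindent\textbf{Key steps.} First I would establish the pointwise identity
\[
(i\hbar\nabla + \widetilde{F})\big(e^{if/\hbar}u\big) = e^{if/\hbar}\,(i\hbar\nabla + F)u .
\]
This is the heart of the matter: expanding the left side by the product rule gives $e^{if/\hbar}\big[i\hbar\nabla u + (i \cdot \tfrac{i}{\hbar}\nabla f\cdot\hbar) u + \widetilde{F}u\big] = e^{if/\hbar}\big[i\hbar\nabla u - \nabla f\, u + (F+\nabla f)u\big]$, and the two $\nabla f\, u$ terms cancel, leaving $e^{if/\hbar}(i\hbar\nabla+F)u$. Second, because $|e^{if/\hbar}|=1$, the map $u\mapsto e^{if/\hbar}u$ is a unitary bijection of $L^2(\Omega;\C)$ onto itself that also maps $H^1_0(\Omega;\C)$ onto $H^1_0(\Omega;\C)$ (and $H^1$ onto $H^1$, since $f\in C^\infty$ so multiplication by $e^{if/\hbar}$ preserves Sobolev regularity and the boundary trace). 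Third, combining the pointwise identity with $|e^{if/\hbar}|=1$ shows that the numerator and denominator of the Rayleigh quotient are unchanged:
\[
\int_\Omega |(i\hbar\nabla+\widetilde{F})(e^{if/\hbar}u)|^2\,dx = \int_\Omega |(i\hbar\nabla+F)u|^2\,dx,
\qquad
\int_\Omega |e^{if/\hbar}u|^2\,dx = \int_\Omega |u|^2\,dx,
\]
so $R_{\widetilde{F}}[e^{if/\hbar}u]=R_F[u]$. Finally, since $u\mapsto e^{if/\hbar}u$ is a unitary bijection carrying the trial space for $\widetilde{F}$ onto that for $F$ and preserving the Rayleigh quotient, the minimax principle gives $\lambda_j(\Omega,\hbar,\widetilde{F})=\lambda_j(\Omega,\hbar,F)$ for every $j$.

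\smallskip

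\noindent\textbf{Boundary conditions.} For the Neumann case nothing changes beyond replacing $H^1_0$ by $H^1$, which the unitary map also preserves. For Robin--de Gennes, I must additionally check the boundary term $\sigma\int_{\partial\Omega}|u|^2\,ds$; but $|e^{if/\hbar}u|=|u|$ pointwise on $\partial\Omega$ as well, so that integral is likewise invariant and the same argument applies verbatim.

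\smallskip

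\noindent\textbf{Main obstacle.} The computation itself is routine; the only genuine care needed is the sign and factor bookkeeping in the product-rule identity (the cancellation of the $\nabla f$ terms depends on the precise placement of $\hbar$ and the factor $i$), and the verification that multiplication by the unimodular smooth factor $e^{if/\hbar}$ is a bona fide automorphism of the relevant function spaces preserving the boundary trace. Neither is deep, so I expect the proof to be short.
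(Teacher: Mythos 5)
Your proof is correct and follows essentially the same route as the paper: multiply the trial function by a unimodular phase, verify the pointwise intertwining identity $(i\hbar\nabla+\widetilde{F})(e^{i\phi}u)=e^{i\phi}(i\hbar\nabla+F)u$, observe the Rayleigh quotient (including the Robin boundary term) is unchanged, and invoke the variational characterization of the eigenvalues. If anything, your bookkeeping is more careful than the paper's: you use the phase $e^{if/\hbar}$, which is the correct choice for general $\hbar$, whereas the paper writes $v=e^{if}u$, for which the displayed identity holds only when $\hbar=1$ (equivalently, only after replacing $f$ by $\hbar f$).
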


\begin{proof}
For the Dirichlet spectrum, given a trial function $u \in H^1_0(\Omega;\C)$ we define $v=e^{if}u \in H^1_0(\Omega;\C)$. Then $|v|^2=|u|^2$ and 
\[
(i\hbar\nabla + \widetilde{F})v = e^{if} (i\hbar\nabla + F)u
\]
by direct calculation. Taking the magnitude and then squaring and integrating shows that the Rayleigh quotient of $v$ with potential $\widetilde{F}$ equals the Rayleigh quotient of $u$ with potential $F$. Hence the lemma follows from the variational characterization of the eigenvalues  \cite[p.\,97]{B80}. 
\end{proof}

Next we show that the eigenvalues depend only on the curl of the potential, so that we are justified in writing $\lambda_j(\Omega,\hbar,B)$ instead of $\lambda_j(\Omega,\hbar,F)$.

\begin{lemma}[Independence from choice of potential] \label{indeppot}
If $\widetilde{B}=B$ on $\R^2$ then $\tilde{F}$ and $F$ generate the same magnetic eigenvalues.
\end{lemma}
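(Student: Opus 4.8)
The plan is to reduce this to the gauge invariance result, Lemma~\ref{gaugelemma}. Since $\widetilde{F}$ and $F$ generate the same field, their difference $G = \widetilde{F} - F$ is curl-free:
\[
\nabla \times G = \widetilde{B} - B = 0 \qquad \text{on all of } \R^2 .
\]
The whole problem then collapses to showing that a smooth curl-free vector field on the plane is the gradient of a smooth function. Indeed, once we know $G = \nabla f$ for some $f \in C^\infty(\R^2)$, we have $\widetilde{F} = F + \nabla f$, and Lemma~\ref{gaugelemma} immediately gives equality of all the eigenvalues.

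To construct the scalar potential $f$, I would invoke the Poincar\'e lemma, or argue directly by a line integral. Concretely, fix a basepoint (say the origin) and set
\[
f(x) = \int_0^1 G(tx) \cdot x \, dt ,
\]
which is the integral of $G$ along the straight segment from $0$ to $x$. The point requiring verification is that this does not depend on the chosen path: if $\gamma$ is any closed piecewise-smooth loop, then by Green's theorem the circulation $\oint_\gamma G \cdot d\ell$ equals the integral of $\nabla \times G = 0$ over the enclosed region, hence vanishes. Here one uses in an essential way that $\R^2$ is simply connected, so every loop bounds. Path-independence lets one differentiate under the integral sign to obtain $\nabla f = G$, and $f$ inherits smoothness from $G \in C^\infty(\R^2)$.

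With $f$ in hand, we conclude $\widetilde{F} = F + \nabla f$ with $f \in C^\infty(\R^2)$, so Lemma~\ref{gaugelemma} yields $\lambda_j(\Omega, \hbar, \widetilde{F}) = \lambda_j(\Omega, \hbar, F)$ for every $j$, and the same for the Neumann and Robin--de Gennes eigenvalues. This justifies writing the eigenvalues as functions of the field $B$ rather than the potential.

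The only real obstacle is the exactness step, namely producing $f$ from the curl-free field $G$; but this is entirely standard on the plane precisely because $\R^2$ is simply connected. The mild care needed is to confirm smoothness of $f$ and to note that simple connectivity is used essentially --- the argument would fail on a domain with a hole, where a curl-free field need not be a gradient. Since the potentials here are globally defined on $\R^2$, no such difficulty arises.
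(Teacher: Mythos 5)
Your proof is correct and follows essentially the same route as the paper: the paper's proof also observes that since $\R^2$ is simply connected and $\widetilde{F}$, $F$ have the same curl, they differ by a gradient, and then invokes Lemma~\ref{gaugelemma}. The only difference is that you spell out the Poincar\'e-lemma step (the line-integral construction of $f$) which the paper takes as standard.
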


\begin{proof}
Since $\R^2$ is simply connected and the vector potentials $\widetilde{F}$ and $F$ have the same curl, the potentials differ by a gradient vector. Hence they generate the same magnetic spectrum, by Lemma~\ref{gaugelemma}.
\end{proof}

\begin{lemma}[Independence from direction of magnetic field] \label{direction}
The fields $B$ and $-B$ generate the same magnetic eigenvalues.
\end{lemma}

\begin{proof}
Given a trial function $u$ to be used for the field $B$, define a trial function $v=\overline{u}$ for the field $-B$. Obviously $|u|=|v|$ and $|(i\hbar \nabla + F)u|=|(i\hbar \nabla - F)v|$, and so the respective Rayleigh quotients of $u$ and $v$ are equal, which proves the lemma by the variational characterization of eigenvalues. 
\end{proof}

For the remainder of the appendix we assume the magnetic field is constant, with $B=(0,0,\beta)$ for some $\beta \in \R$.

\begin{lemma}[Invariance under rotation] \label{rotation}
If $V$ is a $2 \times 2$ rotation matrix then $\lambda_j(V(\Omega),\hbar,\beta)=\lambda_j(\Omega,\hbar,\beta)$ for each $j$, and similarly for the Robin--de Gennes and Neumann eigenvalues.
\end{lemma}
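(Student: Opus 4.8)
The plan is to show that rotating the domain does not change the magnetic eigenvalues, by transplanting trial functions via the rotation and checking that the Rayleigh quotient is preserved after a suitable gauge adjustment. The subtlety, compared with the nonmagnetic case, is that the vector potential $F(x)=\frac{\beta}{2}(-x_2,x_1)$ is not literally invariant under the change of variables induced by $V$ unless one accounts for how $F$ transforms; fortunately the \emph{field} $B=(0,0,\beta)$ is rotationally invariant, so any discrepancy in the potential is a pure gauge and can be absorbed using Lemma~\ref{gaugelemma}.

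Concretely, given a trial function $u \in H^1_0(\Omega;\C)$, I would define $v = u \circ V^{-1} \in H^1_0(V(\Omega);\C)$. Changing variables $x \mapsto Vx$ in both integrals of the Rayleigh quotient, and using $|\det V|=1$, reduces the problem to comparing $\int_{\Omega} |(i\hbar\nabla + \widetilde F)u|^2\,dx$ with $\int_\Omega |(i\hbar\nabla + F)u|^2\,dx$, where $\widetilde F$ is the potential $F$ pulled back through $V$. The chain rule gives $\widetilde F(x) = F(Vx)\,V$ (as a row vector), and a direct computation shows $\nabla\times\widetilde F = \nabla\times F = \beta$, since rotating the plane leaves a constant transverse field unchanged. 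Thus $\widetilde F$ and $F$ generate the same field on $\R^2$.

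With that in hand, the key step is to invoke Lemma~\ref{indeppot}: since $\widetilde F$ and $F$ have equal curl on the simply connected plane, they generate the same magnetic eigenvalues. Therefore the infimum of the Rayleigh quotient on $V(\Omega)$ with potential $F$ equals the infimum on $\Omega$ with potential $\widetilde F$, which in turn equals the infimum on $\Omega$ with potential $F$. Applying this to the minimax characterization of every $\lambda_j$ (not just $\lambda_1$) yields $\lambda_j(V(\Omega),\hbar,\beta)=\lambda_j(\Omega,\hbar,\beta)$. For the Robin--de Gennes and Neumann cases one repeats the argument with $H^1$ in place of $H^1_0$; the only extra ingredient is the boundary integral $\int_{\partial\Omega}|u|^2\,ds$, which is preserved because a rotation is an isometry and hence leaves arclength measure invariant.

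The main obstacle, and the place requiring care, is verifying that the pulled-back potential $\widetilde F$ really does have the same curl as $F$ rather than merely the same field strength up to sign or up to an additive gradient. This is where the specific structure of rotations matters: a general linear map would alter the flux by $\det$, but an orthogonal $V$ with $\det V = 1$ preserves both orientation and the constant $\beta$, so the curl is exactly reproduced. Once this computation is pinned down, the rest is a routine application of the gauge-invariance machinery already established.
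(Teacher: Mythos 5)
Your proposal is correct, and its skeleton is the same as the paper's: transplant trial functions by $v = u\circ V^{-1}$, use $|\det V|=1$ to match denominators and preserve orthogonality, and compare numerators after the change of variables. Where you diverge is in how the mismatch of vector potentials is resolved. You pull $F$ back to $\widetilde F(x) = F(Vx)V$, observe that the curl is unchanged (the flux factor $\det V$ equals $1$), and then invoke Lemma~\ref{indeppot} so that $\lambda_j(\Omega,\hbar,\widetilde F) = \lambda_j(\Omega,\hbar,F)$. The paper instead proves the \emph{exact} identity $F(x) = F(V^{-1}x)V^{-1}$ for the specific potential $F(x)=\beta(Mx)^\dagger$, using that $2\times 2$ rotations commute (so $M^\dagger$ commutes with $V$); in other words, for this potential the pullback is literally $F$ again, and no gauge adjustment is needed at all --- the ``pure gauge'' discrepancy you allow for is in fact zero. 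Both routes are legitimate here, since Lemmas~\ref{gaugelemma} and~\ref{indeppot} are established before Lemma~\ref{rotation} in the appendix. Your argument is slightly more robust: it works verbatim for any smooth potential generating the constant field and isolates the real reason for rotation invariance (invariance of the field itself), which is the pattern one would need for more general symmetries. The paper's computation is more self-contained and shorter, at the cost of exploiting a special algebraic feature of the chosen gauge. Your treatment of the Neumann and Robin cases (replace $H^1_0$ by $H^1$; arclength is preserved by the isometry) matches the paper's.
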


\begin{proof}
Choose the vector potential $F(x)=\beta(Mx)^\dagger=\frac{\beta}{2}(-x_2,x_1)$ where $M=
\left(
 \begin{smallmatrix}
  0 & -1/2 \\
 1/2 & 0
 \end{smallmatrix}
\right)$, which gives the field $\nabla \times F = (0,0,\beta)$. We use this vector potential on both the domains $\Omega $ and $V(\Omega)$. 

Given a trial function $u$ on $\Omega $, define $v=u \circ V^{-1}$  on $V(\Omega)$. Obviously $u$ and $v$ have the same $L^2$ norms, and thus the denominators of their respective Rayleigh quotients are equal. The numerator of the Rayleigh quotient for $v$ has integrand 
\begin{align*}
|(i\hbar\nabla + F)v(x)|^2 
& = \big| i \hbar(\nabla  u)(V^{-1}x)V^{-1}+u(V^{-1}x)F(x) \big|^2 \\
& = |i \hbar(\nabla  u)(V^{-1}x)+u(V^{-1}x)F(V^{-1}x)|^2
\end{align*}
because $F(x)=F(V^{-1}x)V^{-1}$ (using that rotations commute in $2$ dimensions and so $M^\dagger$ commutes with $V$). Hence after integrating and changing variable with $x \mapsto Vx$ we see
that the numerators of the Rayleigh quotients of $v$ and $u$ are equal. (Also, in the Robin case $\int_{\partial V(\Omega)} |v|^2 \, ds = \int_{\partial \Omega} |u|^2 \, ds$.)

The lemma follows immediately from the variational characterization of the eigenvalues.  
\end{proof}

\begin{lemma}[Invariance under reflection]
 \label{reflection}
If $V$ is a $2 \times 2$ reflection matrix then $\lambda_j(V(\Omega),\hbar,\beta)=\lambda_j(\Omega,\hbar,\beta)$ for each $j$, and similarly for the Robin--de Gennes and Neumann eigenvalues.
\end{lemma}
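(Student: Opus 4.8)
The plan is to prove Lemma~\ref{reflection} by following the template already established for the rotation case (Lemma~\ref{rotation}), adapting it to account for the key difference: a reflection reverses orientation, and hence flips the sign of the magnetic field. First I would fix a reflection matrix $V$ (so $V=V^{-1}=V^\dagger$ and $\det V=-1$) and choose the same vector potential $F(x)=\beta(Mx)^\dagger=\frac{\beta}{2}(-x_2,x_1)$ with $M=\left(\begin{smallmatrix} 0 & -1/2 \\ 1/2 & 0\end{smallmatrix}\right)$ as in the rotation proof. Given a trial function $u$ on $\Omega$, the natural candidate on $V(\Omega)$ is $v=u\circ V^{-1}$, exactly as before. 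The $L^2$ norms match trivially, so the denominators of the two Rayleigh quotients agree, and (in the Robin case) the boundary integrals agree as well.

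The crucial computation is the numerator. By the chain rule,
\[
(i\hbar\nabla + F)v(x) = i\hbar(\nabla u)(V^{-1}x)V^{-1} + u(V^{-1}x)F(x).
\]
The obstacle — and the main point where this proof diverges from the rotation case — is that $M^\dagger$ does \emph{not} commute with a reflection $V$; instead $V^{-1}MV=-M$, since conjugating the infinitesimal rotation generator by an orientation-reversing map negates it. Consequently $F(x)=\beta(Mx)^\dagger$ transforms as $F(x)V^{-1}=\beta(MVx)^\dagger V^{-1}$, and unwinding this shows that $F$ does not simply pull back to itself but rather to the potential for the \emph{reversed} field $-B$. So the honest statement is that $v$ has the same Rayleigh quotient, with potential $F$ on $V(\Omega)$, as $u$ does with the potential for field $-\beta$ on $\Omega$; equivalently, after the change of variable $x\mapsto Vx$, the numerator integrand for $v$ equals the magnetic-energy density for $u$ computed with field $-\beta$.

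The clean way to finish is therefore to invoke Lemma~\ref{direction} (independence from the direction of the magnetic field), which I am entitled to use since it appears earlier in the appendix. The argument proceeds as follows: the reflection $V$ sends the $\beta$-eigenvalue problem on $V(\Omega)$ to the $(-\beta)$-eigenvalue problem on $\Omega$ via $v=u\circ V^{-1}$, giving $\lambda_j(V(\Omega),\hbar,\beta)=\lambda_j(\Omega,\hbar,-\beta)$; then Lemma~\ref{direction} supplies $\lambda_j(\Omega,\hbar,-\beta)=\lambda_j(\Omega,\hbar,\beta)$, and composing the two equalities yields the claim. I expect the sign-tracking in the conjugation identity $V^{-1}MV=-M$ to be the only genuinely delicate step; once that is pinned down, the change of variables and the appeal to Lemma~\ref{direction} are routine, and the Neumann and Robin--de Gennes cases follow by the same boundary-integral bookkeeping noted parenthetically in the rotation proof.
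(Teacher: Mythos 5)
Your proposal is correct and follows essentially the same route as the paper: both arguments pull back trial functions via $v=u\circ V^{-1}$, observe that a reflection reverses the sign of the magnetic field (so $\lambda_j(V(\Omega),\hbar,\beta)=\lambda_j(\Omega,\hbar,-\beta)$), and then finish with Lemma~\ref{direction}. The only difference is implementational: the paper first uses rotation invariance to reduce to the reflection $V=\left(\begin{smallmatrix}-1 & 0\\ 0 & 1\end{smallmatrix}\right)$ and works in the gauge $F(x)=\beta(0,x_1)$, checking $F(x)=-F(V^{-1}x)V^{-1}$ directly, whereas you handle an arbitrary reflection at once in the symmetric gauge via the (correct) anticommutation identity $V^{-1}MV=-M$.
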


\begin{proof}
In view of the rotation invariance in the preceding lemma, we may assume the reflection occurs across the $x_2$-axis, with $V=\left(\begin{smallmatrix} -1 & 0 \\ 0 & 1 \end{smallmatrix}\right)$. 
Choose vector potential $F(x)=\beta(0,x_1)$, for which $\nabla \times F = (0,0,\beta)$, and check that $F(x)=-F(V^{-1}x)V^{-1}$. Now adapt the proof of Lemma~\ref{rotation} to show  $\lambda_j(V(\Omega),\hbar,\beta)=\lambda_j(\Omega,\hbar,-\beta)$. Then replace $-\beta$ with $\beta$ on the right side, by Lemma~\ref{direction}.
\end{proof}

\begin{lemma}[Invariance under translation]
 \label{translation}
If $y \in \R^2$ then $\lambda_j(\Omega +y,\hbar,\beta)=\lambda_j(\Omega,\hbar,\beta)$ for each $j$, and similarly for the Robin--de Gennes and Neumann eigenvalues.
\end{lemma}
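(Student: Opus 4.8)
The plan is to translate a trial function and then reduce to a gauge comparison on the original domain. The essential point --- and the reason the statement is not entirely trivial --- is that the vector potential $F(x)=\frac{\beta}{2}(-x_2,x_1)$ is \emph{not} translation invariant, even though the field $\nabla \times F = (0,0,\beta)$ is constant; naively keeping the same $F$ while moving the domain would change the Rayleigh quotient. The resolution is that translating $F$ alters it only by a constant vector, which is gauge-trivial.

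First I would set up the correspondence between trial functions. Given $u$ on $\Omega$, define $v(x)=u(x-y)$ on $\Omega+y$. Translation preserves the $L^2$ norm and maps $H^1_0(\Omega;\C)$ bijectively onto $H^1_0(\Omega+y;\C)$ (and likewise $H^1$ onto $H^1$ in the Neumann and Robin situations); in the Robin case the boundary integral is also preserved, since $\partial(\Omega+y)=\partial\Omega+y$ and arclength is translation invariant.

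Next I would change variables $x \mapsto x+y$ in the numerator of the Rayleigh quotient of $v$. Because translation has unit Jacobian and the gradient commutes with it, the numerator $\int_{\Omega+y}|(i\hbar\nabla+F)v|^2\,dx$ becomes $\int_{\Omega}|(i\hbar\nabla+F(\cdot+y))u|^2\,dx$; that is, the Rayleigh quotient of $v$ on $\Omega+y$ with potential $F$ equals the Rayleigh quotient of $u$ on $\Omega$ with the \emph{shifted} potential $F(\cdot+y)$.

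Finally I would dispose of the shift, which is the only genuine content. Since $F$ is linear, $F(x+y)=F(x)+F(y)$, so $F(\cdot+y)$ and $F$ differ by the constant vector $F(y)=\nabla\big(F(y)\cdot x\big)$, a gradient; equivalently, the two potentials have the same curl $\beta$. Either way, Lemma~\ref{gaugelemma} (or Lemma~\ref{indeppot}) shows they generate the same eigenvalues on $\Omega$. Combining the three steps, the variational characterization gives $\lambda_j(\Omega+y,\hbar,\beta)=\lambda_j(\Omega,\hbar,\beta)$ for every $j$, with the Neumann and Robin cases following identically. The main (and only nonroutine) obstacle is recognizing in this last step that the translated potential is a gauge transform of the original; everything else is bookkeeping for the change of variables.
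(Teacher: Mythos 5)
Your proposal is correct and is essentially the paper's argument: the paper's proof reads ``Straightforward, by translating the potential along with the domain,'' which amounts to the same two ingredients you use --- translation intertwines the Rayleigh quotients once the potential is shifted, and the shifted potential $F(\cdot+y)=F+F(y)$ is gauge-equivalent to $F$ (Lemma~\ref{gaugelemma}/\ref{indeppot}). Whether the gauge step is invoked on $\Omega$ (as you do) or on $\Omega+y$ (as the paper implicitly does) is immaterial.
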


\begin{proof}
Straightforward, by translating the potential along with the domain.
\end{proof}

\begin{lemma}[Invariance under dilation]
 \label{dilation}
If $r>0$ then $\lambda_j(r\Omega,r\hbar,\beta/r)=\lambda_j(\Omega,\hbar,\beta)$ for each $j$, and similarly for the Neumann eigenvalues. For the Robin--de Gennes eigenvalues, $\rho_j(r\Omega,r\hbar,\beta/r,r\sigma)=\rho_j(\Omega,\hbar,\beta,\sigma)$.
\end{lemma}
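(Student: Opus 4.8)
The plan is to exhibit an explicit bijection between trial functions on $\Omega$ and on $r\Omega$ that preserves the Rayleigh quotient, and then to invoke the variational characterization of the eigenvalues, exactly as in the preceding invariance lemmas. As usual I would fix the vector potential $F(x)=\frac{\beta}{2}(-x_2,x_1)$ on $\Omega$, so that on the dilated domain $r\Omega$ the corresponding potential for field strength $\beta/r$ is $\widetilde{F}(x)=\frac{\beta}{2r}(-x_2,x_1)$.

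Given a trial function $u$ on $\Omega$, I would define $v(x)=u(x/r)$ on $r\Omega$. The key computation is that the scaled momentum operator acting on $v$ reproduces the unscaled operator acting on $u$: since $\nabla v(x)=\frac{1}{r}(\nabla u)(x/r)$, the factor $r\hbar$ cancels the $\frac{1}{r}$ to give $i r\hbar\,\nabla v(x)=i\hbar\,(\nabla u)(x/r)$, while under the substitution $x=ry$ the potential $\widetilde{F}(x)$ becomes $F(y)$ because the $1/r$ in the field strength cancels the $r$ coming from the coordinates. Hence
\[
\big(i r\hbar\,\nabla+\widetilde{F}\big)v(x)=\big[(i\hbar\nabla+F)u\big](x/r).
\]
Squaring the magnitude, integrating over $r\Omega$, and changing variable by $x\mapsto ry$ (so $dx=r^2\,dy$) produces a factor $r^2$ in the numerator; the same change of variable produces a matching $r^2$ in the denominator $\int_{r\Omega}|v|^2\,dx$. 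The two factors cancel, so the Rayleigh quotient of $v$ on $r\Omega$ with data $(r\hbar,\beta/r)$ equals the Rayleigh quotient of $u$ on $\Omega$ with data $(\hbar,\beta)$. Since $u\mapsto v$ is a bijection of $H^1_0(\Omega;\C)$ onto $H^1_0(r\Omega;\C)$ (and of $H^1$ onto $H^1$ in the Neumann case), the minimax principle yields the claimed equality of eigenvalues for both Dirichlet and Neumann conditions.

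For the Robin--de Gennes case I would additionally track the boundary term. Under the dilation the boundary arc length scales as $ds(x)=r\,ds(y)$ while $|v(x)|^2=|u(y)|^2$, so the Robin parameter $r\sigma$ contributes $r\sigma\int_{\partial(r\Omega)}|v|^2\,ds=r^2\sigma\int_{\partial\Omega}|u|^2\,ds$. This is exactly the $r^2$ scaling already exhibited by the bulk numerator and the denominator, so after cancellation the full Robin Rayleigh quotient \eqref{robinrayleigh} with parameter $r\sigma$ on $r\Omega$ reduces to the one with parameter $\sigma$ on $\Omega$, giving $\rho_j(r\Omega,r\hbar,\beta/r,r\sigma)=\rho_j(\Omega,\hbar,\beta,\sigma)$.

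There is no serious obstacle here: the content is entirely a bookkeeping of scaling factors, and the only point requiring care is verifying that the three transformations --- of $\hbar$, of $\beta$, and of the domain --- conspire so that every term in the numerator and the denominator acquires the identical factor $r^2$, with the gradient and field pieces of the magnetic operator each recovering their unscaled form. Once that cancellation is checked, the lemma follows immediately from the variational characterization of the eigenvalues.
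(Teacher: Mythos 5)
Your proof is correct and follows essentially the same route as the paper: transplant trial functions under the dilation, verify that the Rayleigh quotient (including the Robin boundary term) is preserved, and invoke the variational characterization of the eigenvalues. The only differences are cosmetic --- the paper uses the gauge $F(x)=\beta(0,x_1)$ and the normalization $v(x)=r^{-1}u(x/r)$ so that numerator and denominator are each preserved exactly, whereas you use the symmetric gauge and let the common factor $r^2$ cancel in the quotient.
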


\begin{proof}
We use the potential $F(x)=\beta(0,x_1)$ on $\Omega$, giving field strength $\beta$, and potential $F(x/r)= \frac{\beta}{r} (0,x_1)$ on $r \Omega $, giving field strength $\beta/r$. 

Given a trial function $u$ on $\Omega $, define $v(x)=r^{-1}u(x/r)$  on $r \Omega $. Then $u$ and $v$ have the same $L^2$ norms, and so the denominators of their Rayleigh quotients are the same. The numerator of the Rayleigh quotient for $v$ is 
\[
\int_{r \Omega} |(ir\hbar\nabla + F(x/r))v(x)|^2 \, dx 
= \int_{\Omega} \big| i \hbar (\nabla  u)(x)+F(x) u(x) \big|^2 \, dx ,
\]
which equals the numerator of the Rayleigh quotient for $u$. In the Robin case, note also that the boundary term transforms according to $r\sigma \int_{\partial (r \Omega)} |v|^2 \, ds = \sigma\int_{\partial \Omega} |u|^2 \, ds$.

Once again the lemma follows from the variational characterization of the eigenvalues.  
\end{proof}

Finally we show the eigenvalues are positive, when the field is nonzero. 
\begin{lemma}[Positivity of the energy] \label{poseigen}
If $\beta \neq 0$ then the first eigenvalues $\lambda_1(\Omega,\hbar,\beta), \rho_1(\Omega,\hbar,\beta)$ and $\mu_1(\Omega,\hbar,\beta)$ are all positive. 
\end{lemma}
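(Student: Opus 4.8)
The plan is to argue by contradiction: a vanishing first eigenvalue would force the magnetic gradient of an eigenfunction to vanish identically, which is incompatible with a nonzero field. The algebraic heart of the matter is the commutator identity for the two components of the magnetic momentum operator, which is precisely the curvature of the connection and hence detects the field strength $\beta$.

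First I would record that in all three problems the Rayleigh quotient is nonnegative: the numerator is the sum of the interior term $\int_\Omega |(i\hbar\nabla+F)u|^2 \, dx$ and, in the Robin case, the boundary term $\sigma \int_{\partial\Omega} |u|^2 \, ds$, both of which are $\geq 0$ since $\sigma>0$. Thus each of $\lambda_1,\mu_1,\rho_1$ is at least zero by the variational characterization, and only the value $0$ must be excluded. Suppose then that the first eigenvalue equals $0$ for one of the three problems, with corresponding eigenfunction $u \not\equiv 0$, which is smooth by the elliptic regularity cited in Section~\ref{notation}. Because $R[u]=0$ and every term in the numerator is nonnegative, the interior integral must vanish; by continuity of $u$ this gives $(i\hbar\nabla+F)u=0$ throughout $\Omega$. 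Writing $P_k = i\hbar\partial_k + F_k$ for $k=1,2$, this says $P_1 u = P_2 u = 0$.

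Next I would compute the commutator of the two first-order operators. A direct calculation on smooth functions, in which the pure second-derivative terms cancel, yields
\[
[P_1,P_2] = i\hbar(\partial_1 F_2 - \partial_2 F_1) = i\hbar\beta ,
\]
the last equality because $\nabla \times F = (0,0,\beta)$. Applying this to the eigenfunction and using $P_1 u = P_2 u = 0$ gives $i\hbar\beta\, u = [P_1,P_2]u = 0$. Since $\hbar>0$ and $\beta \neq 0$, this forces $u \equiv 0$, contradicting that $u$ is an eigenfunction. The contradiction establishes positivity in all three cases at once, since the Robin boundary term enters only by contributing nonnegatively to the numerator and otherwise plays no role.

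I expect no serious obstacle: the argument is driven entirely by the vanishing of the interior magnetic energy, and the essential content is the commutator (curvature) identity $[P_1,P_2]=i\hbar\beta$, which expresses that no nonzero covariantly constant function can exist when the field is nonzero. The only point requiring a little care is regularity --- that the hypothetical ground state is smooth enough to justify differentiating twice in the commutator computation --- but this is already furnished by the standard elliptic theory invoked earlier for the existence of a smooth orthonormal eigenbasis, so no separate bootstrap is needed.
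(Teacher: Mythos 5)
Your proof is correct, and it reaches the contradiction by a genuinely different mechanism than the paper's. Both arguments begin the same way: if the first eigenvalue vanishes, then the interior energy $\int_\Omega |(i\hbar\nabla+F)u|^2\,dx$ must vanish, so $(i\hbar\nabla+F)u \equiv 0$ in $\Omega$ (and in the Robin case the boundary term is simply nonnegative and plays no further role, exactly as you say). From there the paper works locally: at a point where $u \neq 0$ it writes $i\hbar\nabla \log u \equiv -F$, so that $F$ is locally a gradient field, and the compatibility condition $\partial_1 F_2 = \partial_2 F_1$ for gradients then forces $\beta = 0$, contradicting the hypothesis. You instead invoke the curvature identity $[P_1,P_2] = i\hbar(\partial_1 F_2 - \partial_2 F_1) = i\hbar\beta$ for the magnetic momenta $P_k = i\hbar\partial_k + F_k$, apply it to the covariantly constant function $u$, and conclude $u \equiv 0$, contradicting that $u$ is an eigenfunction. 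These are two faces of the same fact---nonzero curvature obstructs the existence of a covariantly constant section---but the implementations differ. Your route avoids the complex logarithm entirely (no branch choice, no restriction to a neighborhood where $u \neq 0$) and delivers the global conclusion $u \equiv 0$ in one stroke; the price is a second-order computation, which you correctly cover by the interior smoothness of eigenfunctions cited in Section~\ref{notation} (in fact the commutator identity also holds in the sense of distributions, since $F$ is smooth, so even that regularity is not truly needed). The paper's route stays first-order in $u$: once $F = \nabla(-i\hbar\log u)$ locally, the vanishing of the curl is automatic, and the contradiction lands directly on the field strength $\beta$ rather than on the eigenfunction.
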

\begin{proof}
Suppose one of the first eigenvalues equals $0$, and let $u$ be a corresponding eigenfunction using the potential $F(x)=(0,\beta x_1)$. Since the Rayleigh quotient of $u$ equals $0$, we deduce $i \hbar \nabla u + F u \equiv 0$ in $\Omega$. 

Consider a point at which $u \neq 0$. Near that point we have $i\hbar \nabla \log u \equiv -F$, so that $F$ is locally a gradient vector. Therefore $F$ must satisfy the compatibility condition $\partial_1 F_2 = \partial_2 F_1$, which says $\beta=0$.\end{proof}
The first Dirichlet and Robin eigenvalues remain positive even when $\beta=0$. The Neumann eigenvalue $\mu_1$ equals $0$, when $\beta=0$. 

Incidentally, monotonicity of the first Neumann eigenvalue $\mu_1(\Omega,\hbar,\beta)$ with respect to the field strength is relevant in superconductivity. Monotonicity has been proved for large $\beta$ by Fournais and Helffer \cite{FH07}.

\end{document}